    \newcommand{\BA}{{\mathbb {A}}} 
    \newcommand{\BC}{{\mathbb {C}}}
    \newcommand{\BQ}{{\mathbb {Q}}} \newcommand{\BR}{{\mathbb {R}}}
     \newcommand{\BZ}{{\mathbb {Z}}}
     \newcommand{\CF}{{\mathcal {F}}}
     \newcommand{\CN}{{\mathcal {N}}}
    \newcommand{\CO}{{\mathcal {O}}} 
    \newcommand{\CS}{{\mathcal {S}}}
  \newcommand{\cal}{\mathcal}
    \newcommand{\diag}{{\mathrm{diag}}}
     \newcommand{\GL}{{\mathrm{GL}}}
    \newcommand{\Hom}{{\mathrm{Hom}}}
    \renewcommand{\Im}{{\mathrm{Im}}}
    \newcommand{\Res}{{\mathrm{Res}}}
    \newcommand{\SL}{{\mathrm{SL}}}
    \newcommand{\Sp}{{\mathrm{Sp}}}
    \newcommand{\tr}{{\mathrm{tr}}}
    \newcommand{\vol}{{\mathrm{vol}}}
    \newcommand{\matrixx}[4]{\begin{pmatrix}
#1 & #2 \\ #3 & #4
\end{pmatrix} }        
\newtheorem{thm}{Theorem}[subsection]
\newtheorem{defn}[thm]{Definition}
\newtheorem{prop}[thm]{Proposition}
\newtheorem{rmk} [thm]{Remark}
\newtheorem{lem}[thm]{Lemma}
    \newcommand{\pair}[1]{\langle {#1} \rangle}
    \newcommand{\Gr}{\mathrm{Gr}}
    \numberwithin{equation}{section}
\newcommand{\la}{\langle}
\newcommand{\ra}{\rangle}
\begin{document}

\title{Theta Liftings for $(\GL_n,\GL_n)$ Type Dual Pairs of Loop Groups}
\author[Yanze Chen, Yongchang Zhu]{Yanze Chen$^*$ and Yongchang Zhu}
\address{Department of Mathematics, The Hong Kong University of Science and Technology, Clear Water Bay, Kowloon, Hong Kong}

\email{ychenen@connect.ust.hk}
\email{mazhu@ust.hk}

\thanks{$*$This research is supported by Hong Kong RGC grant 16301718.}
\maketitle
\begin{abstract}
	In this article we prove the theta liftings of a cusp form on the loop group $\GL_n$ induced from a classical cusp form for the loop group ``dual pair'' $(\GL_n,\GL_n)$ is an Eisenstein series.
\end{abstract}

\section{Introduction   }\label{S1}

The method of theta lifting between reductive dual pairs via Weil representations provides a very useful tool and has wide applications in representation theory and automorphic forms.
In the global theory, one considers the lifting of an irreducible automorphic cuspidal representation $\pi$ of a classical group $G'$ to its dual group $G$.
The theta lifting for automorphic forms on loop groups was first considered in \cite{GZ1} \cite{GZ2}, where a version of Siegel-Weil formula is proved, it can be considered
as a lifting of the trivial automorphic forms to Eisenstein series. In \cite{LZ2}, Rallis constant term formula and the tower properties are
 proved for the lifting of loop orthogonal group to loop
 sympletic group. In this work, we consider $( \GL_n , \GL_n) $ dual pair. When $n\geq2$, we prove that a cuspidal automorphic form $\varphi$ of classical group $\GL_n$ lifts to the
 an Eisenstein of loop $\GL_n $ induced from $\varphi$.

 To state our main result, we briefly introduce some notations and background.
 For a finite dimensional symplectic vector space $W$ over the field of rational numbers ${\BQ}$ we denote
\[
W((t))={W}\otimes_{\BQ}{\BQ}((t)),\quad W [[t]]=W\otimes_{\BQ}{\BQ}[[t]],\quad {W}[t^{-1}]t^{-1}= W\otimes_{\BQ} {\BQ}[t^{-1}]t^{-1}.
\]
Let  ${\BA}$ be the adele ring of ${\BQ}$. We fix a non-trivial additive character $\psi:\BQ\backslash\BA\to U(1)$.  Similar notations apply for certain adelic spaces with ${\BQ}((t))$ replaced by
 ${\BA}\langle t\rangle$, which is a certain restricted product of
local Laurent series ${\BQ}_v((t))$ (see Section 2.2). We also need ${\BQ} \langle t\rangle:=
{\BQ}((t))\cap {\BA} \langle t\rangle$.
 Let $(G,G')$ be a reductive dual pair over ${\BQ}$ sitting inside $\Sp({W})$,
 Following \cite{LZ1}, we have the adelic loop symplectic group $\widetilde{\mathrm{Sp}}( {W}_{\BA}\langle t\rangle)$, which is a central extension of $\mathrm{Sp}( {W}_{\BA}\langle t\rangle)$ and can be written as a certain restricted product of local loop groups. Its Weil representation, denoted by $\omega$, is realized on a certain space $\mathcal{S}'({ V}_{-,\BA})$ of Bruhat-Schwartz functions,
  where ${ V_-}:={ W}[t^{-1}]t^{-1}$ is a Larangian space of ${W}\langle t\rangle$. An important new feature in the loop setting is that in the definition of $\cal{S}'({V}_{-,\BA})$ one has to invoke a parameter of loop rotations $q: t\mapsto qt$, where $q\in{\BA}^\times$ has idele norm $|q|>1$.  Then the theta functional for $f\in \mathcal{S}'({ V}_{-,\BA})$,
\[
\theta(f):=\sum_{r\in { V}_{-,F}}f(r)
\]
converges absolutely \cite{Z, LZ1}, and $g\mapsto \theta(\omega(g)f)$ defines an automorphic function on the arithmetic quotient $\rm{Sp}({W}\langle t\rangle)\backslash \widetilde{\rm{Sp}}({W}_{\BA}\langle t\rangle)$.
\par In this work, we consider the type-two dual pair $(\GL_n , \GL_n )$ in $\rm{Sp}({W})$, so ${\dim} \, W = 2 n ^2 $. We assume $n\geq2$ in this introduction, although we also considered the case $n=1$ in Section 5.  Let
 $ \GL_n ({\BA})^1$ be the subgroup of $\GL_n ({\BA})$ consisting of elements $g \in \GL_n ({\BA})$ with condition
 $ | {\rm det} ( g ) |=1 $. It contains $\GL_n ( {\BQ} )$ because of the product formula. Let
\[ \GL_n ( {\BA} \la t \ra_+  )^1 = \{ g \in \GL_n ( {\BA} \la t \ra_+  ) \, | \,   g ( 0) \in \GL_n ({\BA})^1\} \]
 Let $\varphi$ be an irreducible automorphic cusp form on  $\GL_n ( {\BQ} ) \backslash \GL_n ({\BA})^1$,
 We pull back $\varphi$ to $\GL_n ({\BA}\langle t\rangle_+)^1 $ via the evaluation map
 $\GL_n ({\BA}\langle t\rangle_+)^1 \to \GL_n ({\BA})^1 $ at $t=0$,
  where ${\BA}\langle t\rangle_+={\BA}\langle t\rangle\cap {\BA}[[t]]$, and define the theta lifting to
  $\widetilde\GL_n ( {\BA} \la t \ra )$ by
\begin{equation}\label{1.1}
\Theta_f(\varphi,g)=\int_{\GL_n ( {\BQ} \langle t\rangle_+)\backslash \GL_n ({\BA}\langle t\rangle_+)^1 }
   \theta(\omega(h)\omega(g)f)\varphi(h)dh,\quad g\in \GL_n ({\BA} \langle t\rangle),
\end{equation}
noting that the quotient space $\GL_n ( {\BQ} \langle t\rangle_+)\backslash \GL_n ({\BA}\langle t\rangle_+)^1$ is locally compact. We can show that  the theta function is of polynomial growth hence the theta integral converges thanks to the rapid decay of cusp forms. We also remark that in defining the theta lifting (\ref{1.1}) we take the integral only over $\GL_n ( {\BQ} \langle t\rangle_+)\backslash \GL_n ({\BA}\langle t\rangle_+)^1$ instead of $\GL_n ( {\BQ} \langle t\rangle_+)\backslash \GL_n ({\BA}\langle t\rangle_+)$,
the main reason
 is that the subgroups $\widetilde \GL_n  ({\BA} \langle t\rangle)  $ and $ \GL_n ({\BA}\langle t\rangle_+)$  in $ \widetilde{\rm Sp} ( W_{\BA} \la t \ra )$ are
  not commutative. But ${\GL_n } ({\BQ} \langle t\rangle)  $ commutes with $\widetilde \GL_n  ({\BA} \langle t\rangle_+ )^1  $.
\par On the other hand, consider the function $I_f(\varphi,g)$ on $\widetilde \GL_n(\BA\pair{t})$ given by $$I_f(\varphi,g)=\int_{\GL_n(\BA)^1}\left(\int_{M_n(\BA)}(\pi(g)f)(0,yt^{-1})\psi(\tr(ay^t))dy\right)\varphi(a)da$$ for a suitable normalization of Haar measures, where $y^t$ is the transpose of $y$. For $g$ in the subgroup $\GL_n(\BA)$ of $\widetilde\GL_n(\BA\pair{t})$ the function $I_f(\varphi,g)$ is the classical theta lifting of the cusp form $\varphi$ with respect to $f$ in the dual pair $(\GL_n,\GL_n)$. The function $I_f$ is left $\GL_n(\BQ\pair{t}_+)$-invariant, so we define the Eisenstein series induced from $I_f$ by $$E_f(\varphi,g)=\sum_{r\in\GL_n(\BQ\pair{t}_+)\backslash\GL_n(\BQ\pair{t})}I_f(\varphi,g)$$
The main result of this paper is 
\begin{thm}
	With the above notations, we have $$\Theta_f(\varphi,g)=E_f(\varphi,g)$$ for $g\in \widetilde\GL_n(\BA\pair{t})$. 
\end{thm}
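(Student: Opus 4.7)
The plan is to unfold $\Theta_f$ via the standard theta-unfolding technique and match the contributions of ``generic'' $\GL_n(\BQ\langle t\rangle_+)$-orbits on $V_{-,\BQ}$ with the terms of the Eisenstein series $E_f$, with cuspidality killing all other contributions. First I would interchange the sum defining the theta kernel with the outer integral to get
\[
\Theta_f(\varphi, g) = \int_{\GL_n(\BQ\langle t\rangle_+)\backslash \GL_n(\BA\langle t\rangle_+)^1} \sum_{r \in V_{-,\BQ}} (\omega(hg)f)(r)\,\varphi(h)\,dh,
\]
justified by the polynomial growth of the theta series \cite{Z,LZ1} against the rapid decay of the cusp form $\varphi$.

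Next, I would decompose the sum over $V_{-,\BQ}$ into orbits under the action of $\GL_n(\BQ\langle t\rangle_+)$ on $V_-$ coming from the Weil representation. Using the decomposition $V_- = M_n[t^{-1}]t^{-1} \oplus M_n[t^{-1}]t^{-1}$ induced by the type-II dual pair $(\GL_n,\GL_n)$, I expect two families: \emph{generic} orbits with representatives $r_\gamma = (0, \gamma \cdot t^{-1})$ (only a $t^{-1}$ term in the second factor) indexed by cosets $\gamma \in \GL_n(\BQ\langle t\rangle_+)\backslash \GL_n(\BQ\langle t\rangle)$ matching the sum defining $E_f$, and \emph{degenerate} orbits whose representatives either have nontrivial components in both summands, or have leading coefficients of rank less than $n$, or are supported in higher $t^{-k}$ for $k\ge 2$. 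The cuspidality of $\varphi$ should kill every degenerate orbit: in each case the stabilizer in $\GL_n(\BQ\langle t\rangle_+)$, intersected with the constant classical subgroup $\GL_n(\BQ)$ via $t=0$ evaluation, contains the unipotent radical of a proper standard parabolic of $\GL_n$, and the inner integration over the quotient then isolates the corresponding vanishing constant-term Fourier coefficient of $\varphi$.

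For each generic orbit indexed by $\gamma$, unfolding against the stabilizer together with a Fubini/substitution argument should reduce the orbit contribution to an integral of the form
\[
\int_{\GL_n(\BA)^1}\left( \int_{[K^+]} (\omega(ak\gamma g)f)(0, t^{-1})\,dk \right)\varphi(a)\,da,
\]
where $K^+ = \{h\in \GL_n(\BA\langle t\rangle_+) : h(0)=I_n\}$. The crucial point is that the inner integration over the loop-``unipotent'' $[K^+]$, via the Heisenberg pairing between positive $t$-powers in $V_+$ and the $t^{-1}$-leading coefficient of $V_-$, should realize the additive Fourier transform $y \mapsto \int\cdot\,\psi(\tr(ay^t))\,dy$ appearing in $I_f$; after absorbing the $\GL_n(\BA)^1$-action into the Fourier variable, this matches $I_f(\varphi,\gamma g)$. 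Summing over $\gamma$ then yields $E_f(\varphi,g)$.

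The main difficulty will be the orbit analysis in the infinite-dimensional space $V_{-,\BQ}$, namely establishing the bijection between generic orbits and cosets $\GL_n(\BQ\langle t\rangle_+)\backslash \GL_n(\BQ\langle t\rangle)$ and the vanishing of all degenerate orbit contributions, while simultaneously tracking the metaplectic cocycle and $|\det|$-twists of the loop Weil representation so that the inner $[K^+]$-integration aligns exactly with the Fourier kernel $\psi(\tr(ay^t))$. Convergence of both the orbit sum inside $\Theta_f$ and the Eisenstein sum $E_f$ must also be controlled; both should follow from the rapid decay of $\varphi$ combined with the exponential decay provided by the loop rotation parameter $|q|>1$.
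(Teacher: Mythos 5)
Your high-level strategy — interchange sum and integral, decompose the theta sum into orbits under the unfolding group, kill the degenerate orbits by cuspidality, and match the surviving orbits to the Eisenstein sum — is exactly the strategy the paper follows. But you have no workable way to carry out the orbit decomposition in the model you have chosen, and that is precisely the gap the paper fills with the Kapranov lattice model $\CS(X,L_0)$ (Section 3) transported via the partial Fourier transform $\Phi$. In $\CS(V_-)$ the group $\GL_n(\BQ\langle t\rangle_+)$ acts on $V_- = M_n[t^{-1}]t^{-1}\oplus M_n[t^{-1}]t^{-1}$ only through the truncated operator $v\mapsto va_h$ (the $V_-\!\to V_-$ block of $h$), not by honest matrix multiplication; the orbits of this truncated action mix the two factors and depths in an intricate way, and the theta sum is over the full infinite-dimensional $V_{-,\BQ}$, so there is no finite-dimensional handle on which to carry out the unfolding. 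In the Kapranov model, $\GL_n$ really does act by multiplication on a single copy $X=M_n(\BQ\langle t\rangle)$, the theta functional becomes a limit of partial functionals $\theta_{N,-M}$ over the \emph{finite-dimensional} quotients $t^{-M}L_0/t^NL_0$, and the compatibility relations (3.7) give the stabilization $I(r,N)=I(r,N+1)$ that lets one pass to the limit. These are the tools that make the orbit analysis (Prop.\ 6.2.4) and the identification with $\Gr_n(\BQ)=\GL_n(\BQ\langle t\rangle)/\GL_n(\BQ\langle t\rangle_+)$ go through; without them you are stuck at exactly the point you call ``the main difficulty.''

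Your proposed classification of orbits is also not quite right. In the paper the degenerate orbits are those represented by $\diag(t^{a_1},\dots,t^{a_k},0,\dots,0)$ with $k<n$, i.e.\ the rank-deficient elements of $M_n(\BQ\langle t\rangle)$; these are precisely the orbits whose stabilizers contain (unipotent radicals of) proper parabolics, so cuspidality kills them. Your third class of ``degenerate'' orbits (``supported in higher $t^{-k}$ for $k\ge 2$'') is wrong: elements with deep poles are not degenerate, they contribute the nontrivial Cartan cells $\diag(t^{a_1},\dots,t^{a_n})$ with $a_i$ negative and are exactly what makes the Eisenstein sum an infinite sum over the affine Grassmannian. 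Likewise your generic representatives $(0,\gamma\,t^{-1})$ don't land in $V_{-,\BQ}$ for general $\gamma\in\GL_n(\BQ\langle t\rangle)$, and your intended bijection with $\GL_n(\BQ\langle t\rangle_+)\backslash\GL_n(\BQ\langle t\rangle)$ is not established. Your final step (that the inner integration over $[K^+]$ realizes the Fourier transform $y\mapsto\int(\cdot)\psi(\mathrm{tr}(ay^t))dy$) is a correct heuristic, but it is nothing more and nothing less than the definition of the isomorphism $\Phi$ in (3.12)--(3.13) — so stated this way you are in effect rediscovering the Kapranov model without having the framework to make it rigorous.
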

\

This theorem is proved using another model of the representation of $\widetilde\GL_n ( F(( t)) )$ on ${\cal S} ( V_-)$ for local fields $F$  
which is essentially in \cite{Ka}. 

\

The organization  of this paper is as follows. In Section 2, we briefly recall the local and global theory of Weil representations and theta functionals for loop symplectic groups following \cite{Z, LZ1}.  In Section 3, we construct a different model of
 the representation of
 ${\widetilde \GL_n} ( {\BA} )$ and prove that is isomorphic to  ${\cal S}' ( V_{-,\BA} ) $.
 In Section 4, we define the theta liftings for the $(\GL_n,\GL_n)$ dual pair of loop groups. In Section 5 we prove that when $n=1$ the theta lifting of a constant function can be interpreted as an Eisenstein series. In Section 6 we treat the case $n>1$ and prove the main result. 

{\bf Acknowledgement.} The authors would like to thank Howard Garland for pointing out the isomorphism $\Phi $ of representations of $\widetilde\GL_n(F((t)))$.

\

\section{Weil Representation  }\label{S2}
In this section we briefly recall the Weil representations of symplectic loop groups over local and global fields, as well as the absolute convergence of theta functionals, following \cite{Z} and \cite{LZ1}.

\subsection{Local theory} \label{2.1} Let $F$ be a field of characteristic 0, and $W$ be a symplectic vector space of dimension $2n$ over $F$, with symplectic form denoted by $\langle,\rangle_F$. It gives rise to an $F((t))$-valued symplectic form $\langle,\rangle_{F((t))}$ on $W((t))$ by scalar extensions. It further gives
an $F$-valued symplectic form $\langle,\rangle$ on $W((t))$ by taking the residue
\begin{equation}\label{form}
\langle w, w'\rangle=\textrm{Res }\langle w, w'\rangle_{F((t))},
\end{equation}
where $w, w'\in W((t))$, and Res $a$ for $a\in F((t))$ is the coefficient of $t^{-1}$ in $a$.
 We set
\begin{equation}\label{lag}
  V_-=W[t^{-1}]t^{-1}, \quad V_+=W[[t]]
\end{equation}
are maximal isotropic subspaces of $W((t))$.  Denote by
\begin{equation}\label{pro}
p_-: W((t))\to V_-, \quad p_+: W((t)) \to V_+
\end{equation}
the natural projections. Let $\textrm{Sp}(W((t)), V_+)$ be the group of all $F$-linear symplectic isomorphisms $g$ of $W((t))$ such that $V_+g$ and $V_+$ are commensurable, which contains $\textrm{Sp}_{2n}(F((t)))$ as a subgroup. Here by convention $\textrm{Sp}(W((t)),V_+)$ acts on $W((t))$ from the right. Then with respect to the decomposition $W((t))=V_-\oplus V_+$, each $g\in\textrm{Sp}(W((t)), V_+)$ can be represented by a matrix
\begin{equation} \label{matrix}
g=\begin{pmatrix} a_g & b_g \\  c_g & d_g\end{pmatrix}.
\end{equation}
Note that the image of $c_g: V_+\to V_-$ is a finite dimensional space over $F$.
The group law of the Heisenberg group $H=W((t))\times F$ is defined by
\[
(w, z)(w',z')=(w+w', \frac{1}{2}\langle w, w'\rangle+z+z').
\]
Then $\textrm{Sp}(W((t)),V_+)$ acts on $H$ from the right by $(w,z)\cdot g=(w\cdot g, z)$.

Now assume further that $F$ is a local field. 
Let $\mathcal{S}(V_-)$ be the space of complex valued Schwartz functions on $V_-$, i.e. functions whose restriction to each finite dimensional subspace is a Schwartz function in the usual sense.
Typical examples include the characteristic function of $t^{-1}\mathcal{O}[t^{-1}]^{2n}$ if $F$ is a $p$-adic field with ring of integers $\mathcal{O}$, and the Gaussian
function $e^{iQ(x)}$ if $F$ is archimedean and $Q(x)$ is a complex valued quadratic form on  $V_-$ with positive definite imaginary part.
Fix a non-trivial additive character $\psi$ of $F$. Then the Heisenberg group $H$ acts on $\mathcal{S}(V_-)$ in the usual way such that the central element $(0,z)$ acts by the scalar
$\psi(z)$.
For $g\in \textrm{Sp}(W((t)), V_+)$ with decomposition (\ref{matrix}) and a choice of Haar measure $\mu$  on $\textrm{Im }c_g$, define an operator $T(g)$ on $\mathcal{S}(V_-)$ by
\begin{equation}\label{action}
(T(g)\phi)(x)=\int_{\textrm{Im }c_g}S_g(x+y)\phi(xa_g+ yc_g) \mu (  yc_g),
\end{equation}
where
\[
S_g(x+y)=\psi\left(\frac{1}{2}\langle xa_g, xb_g\rangle+\frac{1}{2}\langle yc_g, yd_g\rangle+\langle yc_g, xb_g\rangle\right).
\]
In particular, if $c_g=0$ then
\begin{equation}\label{gamma0}
(T(g) \phi)(x)=\psi\left(\frac{1}{2}\langle xa_g, xb_g\rangle\right)\phi(xa_g).
\end{equation}
The operator $T(g)$ is compatible with the Heisenberg group action, i.e. for $h\in H$ one has
\[
T (g)^{-1}h T(g) =h\cdot g.
\]
It was proved in \cite{Z} that $T (g_1)T (g_2)$ coincides with $T(g_1g_2)$ up to a scalar, and $g\mapsto T(g)$ gives a projective representation of $\textrm{Sp}(W((t)),V_+)$ on $\mathcal{S}(V_-)$.  By restriction, we obtain a projective representation of $\textrm{Sp}_{2n}(F((t)))$ on $\mathcal{S}(V_-)$.
The associated central extension is denoted by $\widetilde{\textrm{Sp}}_{2n}(F((t)))$, the symbol of the central extension is computed
 in \cite{Z}, its formula will not be used in this work.
We call the representation the Weil representation, denoted by $\left(\omega, \mathcal{S}(V_-)\right)$.

To specify the dependance of $T(g) $ on the the Haar measure $\mu $, we denote it by $ T( g , \mu ) $.
 We will denote the group generated by $T(g , \mu) $ for $ g \in \GL_n ( F (( t)) ) $, any $\mu $ on $\textrm{Im }c_g$
 by $ \widetilde\GL_n ( F (( t)) ) $.  The map $ T( g , \mu ) \mapsto g $ is a homomorphism from  $ \widetilde\GL_n ( F (( t)) ) $
  to ${\GL_n} ( F (( t)) )$ with kernel ${\BR}^\times $. Since we consider only $(\GL , \GL )$-dual pair, only
    $ \widetilde\GL_n( F (( t)) ) $ and its adelic version appear in this work.

We also need to introduce the group of ``rotations of loops" which is isomorphic to $F^\times$.
 It acts on $F((t))$ and the formal 1-forms $F((t))dt$  by
\[
 a (t) \cdot\lambda =a(\lambda^{-1} t) \textrm{\quad and \quad} a(t)dt\cdot \sigma(t)= \lambda^{-1} a(\lambda^{-1} t )dt
\]
 Assume that the symplectic form $\langle,\rangle$ on $F^{2n}$ is represented by the matrix $\begin{pmatrix} 0 & I \\ -I & 0\end{pmatrix}$. View the first $n$ components of $F((t))^{2n}$ as elements in $F((t))$, and the last $n$ components as 1-forms in $F((t))dt$ (without writing $dt$).  Then the action of the rotation group $F^\times $ on $W((t))$ preserves the symplectic form, it can be viewed as a subgroup of $\textrm{Sp}(W((t)),V_+)$. Thus it is clear that elements of $F^\times$ act on $\mathcal{S}(V_-)$ by the formula (\ref{gamma0}). The group $F^\times $ also acts on $\textrm{Sp}_{2n}(F((t)))$ as automorphisms.

We can define a ``maximal compact subgroup" $K$ of $\widetilde{\textrm{Sp}}_{2n}(F((t)))$ so that its intersection with the center $\BC^\times$ is $S^1$, and its image $K'$ in $\textrm{Sp}_{2n}(F((t)))$ is as follows. If $F$ is a $p$-adic field with ring of integers $\mathcal{O}$, then $K'=\textrm{Sp}_{2n}(\mathcal{O}((t)))$; if $F$ is archimedean, then
\[
K'=\{g\in \textrm{Sp}_{2n}(F[t,t^{-1}]): \overline{g(t)}g(t^{-1})^T=I_{2n}\},
\]
where $\overline{(\cdot)}$ stands for the complex conjugate, and $(\cdot)^T$ is the matrix transpose.
We recall the following result from \cite{GZ2}.

\begin{lem}\label{fun}
\begin{enumerate}[(i)]
	\item If $F$ is non-archimedean with odd residue characteristic and ring of integers $\mathcal{O}$, and if the conductor of $\psi$ is $\mathcal{O}$, then the characteristic function
$\phi_0$ of $t^{-1}\mathcal{O}[t^{-1}]^{2n}$ is fixed by $K$.	
	\item If $F$ is archimedean, then there is a nonzero element $\phi_0\in \mathcal{S}(V_-)$ fixed by $K$ up to a scalar.

\end{enumerate}
\end{lem}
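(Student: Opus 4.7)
The plan is to treat both parts by a common strategy: characterize $\phi_0$ by an invariance property preserved by $K$, and then absorb any residual scalar into the choice of lift to the central extension $\widetilde\Sp_{2n}$.

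For part (i), I would first check directly that $\phi_0 = \chi_{V_-(\mathcal{O})}$ is invariant, up to the central character, under the Heisenberg subgroup $H_{\mathcal{O}}$ consisting of elements $(w, z)$ with $w \in \mathcal{O}((t))^{2n}$ and $z \in \mathcal{O}$. Writing $w = w_- + w_+$ with respect to the polarization $V_- \oplus V_+$, translation by $w_- \in V_-(\mathcal{O})$ preserves $V_-(\mathcal{O})$, while multiplication by the character $x \mapsto \psi(\langle x, w_+\rangle)$ is trivial for $x \in V_-(\mathcal{O})$ and $w_+ \in V_+(\mathcal{O})$, since $\langle V_-(\mathcal{O}), V_+(\mathcal{O})\rangle \subset \mathcal{O}$ and the conductor of $\psi$ is $\mathcal{O}$ (odd residue characteristic also takes care of the factor $\tfrac{1}{2}$ in the cocycle). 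Conversely, any $H_{\mathcal{O}}$-invariant function in $\mathcal{S}(V_-)$ is a scalar multiple of $\phi_0$: $V_+(\mathcal{O})$-invariance, applied on each finite-dimensional slice of $V_-$, forces the support to lie in the dual lattice $V_-(\mathcal{O})$, and translation invariance under $V_-(\mathcal{O})$ then forces the function to be constant on this support. Given this uniqueness, for any $g \in K' = \Sp_{2n}(\mathcal{O}((t)))$ the conjugation action on the Heisenberg group preserves $H_\mathcal{O}$ because $g$ stabilizes $\mathcal{O}((t))^{2n}$; hence $T(g)\phi_0$ is again $H_\mathcal{O}$-invariant, so $T(g)\phi_0 = \lambda(g)\phi_0$ for some $\lambda(g) \in \mathbb{C}^\times$. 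Dividing the chosen lift of each $g$ in $\widetilde\Sp_{2n}(F((t)))$ by $\lambda(g)$ defines the subgroup $K$ that strictly fixes $\phi_0$.

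For part (ii), I would construct $\phi_0$ as a ``vacuum Gaussian''. Identify $V_-$ with the space of negative Fourier modes in an $L^2$-completion of $W \otimes C^\infty(S^1)$, and use this identification to equip $V_-$ with a positive-definite complex structure $J$ compatible with the symplectic pairing. Set $\phi_0(x) = e^{-\frac{1}{2}\langle Jx, x\rangle}$; its restriction to each finite-dimensional subspace of $V_-$ is a Gaussian, so $\phi_0 \in \mathcal{S}(V_-)$. The defining condition $\overline{g(t)}g(t^{-1})^T = I_{2n}$ for elements of $K'$ cuts out, upon restriction to $|t|=1$, the loop group of $U(n) \cong \SO(2n) \cap \Sp_{2n}$; equivalently $g$ preserves the chosen complex structure on the Hilbert completion of $V_-$. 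A loop-group analogue of Shale's theorem then yields that $K'$ acts on the one-dimensional line through the vacuum $\phi_0$ by a character, which is exactly the desired statement that $\phi_0$ is fixed by $K$ up to a scalar.

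The main obstacle I expect in either part is making the infinite-dimensional geometry precise: the uniqueness argument in (i) has to be executed carefully on the filtered Schwartz space $\mathcal{S}(V_-)$ (using compatible Haar measures on each finite-dimensional slice), and the compatibility of the complex structure with $K'$ in (ii) requires a loop-group version of the Shale criterion together with the explicit action formula (\ref{action}) evaluated on the Gaussian. Both are extensions of classical finite-dimensional Stone--von Neumann and Shale results to $\mathcal{S}(V_-)$ and form the technical content borrowed from \cite{GZ2}.
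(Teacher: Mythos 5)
The paper contains no proof of this lemma at all: it is recalled verbatim from \cite{GZ2} ("We recall the following result from \cite{GZ2}"), so there is no internal argument to compare yours against; what follows measures your sketch against the standard arguments behind that citation. Your part (i) is essentially that standard argument and is sound: $\phi_0$ is characterized up to scalar as the vector fixed by the integral Heisenberg subgroup $H_{\mathcal O}$ (the pointwise argument that invariance under $V_+(\mathcal O)=\mathcal O[[t]]^{2n}$ forces the support into the dual lattice $t^{-1}\mathcal O[t^{-1}]^{2n}$, plus $V_-(\mathcal O)$-translation invariance, works verbatim here because membership in $\mathcal S(V_-)$ is tested on finite-dimensional subspaces, and the conductor hypothesis and odd residue characteristic enter exactly where you say); $K'=\Sp_{2n}(\mathcal O((t)))$ stabilizes $\mathcal O((t))^{2n}$ and keeps the residue pairing integral, so it normalizes $H_{\mathcal O}$, and $T(g)^{-1}hT(g)=h\cdot g$ then gives $T(g)\phi_0\in\mathbb C^{\times}\phi_0$. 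Your renormalization of the lifts also correctly absorbs the one genuine ambiguity, the Haar measure on $\mathrm{Im}\,c_g$ hidden in $T(g,\mu)$; note only that since the paper's $K$ contains the central $S^1$, "fixed by $K$" can only mean that the line $\mathbb C\phi_0$ is preserved and the lifts are normalized on a complement of $S^1$, which is what your construction produces — a looseness of the statement, not of your argument.

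Part (ii) has the right shape (the Gaussian attached to a $K'$-compatible complex structure is indeed the fixed vector used in \cite{GZ1,GZ2}), but as written the crux is outsourced to "a loop-group analogue of Shale's theorem," which is essentially the assertion being proved, so this step is a genuine gap in your write-up. It can be closed without any such black box: for $g\in K'$ the block $c_g$ has finite-dimensional image, so formula (\ref{action}) reduces the claim to a finite-dimensional Gaussian integral; what you must actually verify is that the condition $\overline{g(t)}g(t^{-1})^{T}=I_{2n}$ means that $g$ preserves the compatible inner product (equivalently commutes with your $J$) on the completed space, after which $T(g)\phi_0$ is again a $J$-adapted Gaussian on each finite-dimensional slice and hence proportional to $\phi_0$. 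That concrete computation (or an equivalent lowest-weight-vector argument at the Lie algebra level) is the technical content you are currently citing rather than supplying. Two smaller inaccuracies: your identification of $K'$ with the loop group of $U(n)$ is not correct as stated — $K'$ is the fixed-point set of a Cartan-type involution, i.e.\ the elements of $\Sp_{2n}(F[t,t^{-1}])$ unitary for the Hermitian structure on the completion, and it is this description, not "loops in $U(n)$," that the argument uses; and your Gaussian should be normalized as in the paper's example $e^{iQ(x)}$ with $\mathrm{Im}\,Q$ positive definite so that its restriction to every finite-dimensional subspace of $V_-$ is Schwartz.
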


\subsection{Global theory} From now on we assume that $F$ is ${\BQ}$. Thus for each place $v$ of ${\BQ}$ one has the local Weil representation $\omega_v$ of $\widetilde{\textrm{Sp}}_{2n}({\BQ}_v((t)))\rtimes
 {\BQ}_v^\times $ on $\mathcal{S}(V_{-, v})$, where
\[ V_{-, v}=W_v[t^{-1}]t^{-1},\quad W_v=W\otimes_{\BQ} {\BQ}_v.\]
Let ${\BA}$ be the adele ring of ${\BQ}$, and $\psi=\prod_v\psi_v$ be a non-trivial character of ${\BA}/{\BQ}$. Define
\[
{\BA}\langle t\rangle =\left\{(a_v)\in\prod_v {\BQ}_v((t)): a_v\in {\BZ}_v((t))\textrm{ for almost all finite places }v\right\},
\]
and ${\BQ}\langle t\rangle= {\BQ}((t))\cap {\BA}\langle t\rangle$, which is a subfield of ${\BQ}((t))$. The adelic metaplectic loop group
   $\widetilde{\textrm{Sp}}_{2n}({\BA}\langle t\rangle)$
   is then the restricted product $\prod'_v \widetilde{\textrm{Sp}}_{2n}({\BQ}_v((t)))$ with respect to the ``maximal compact" subgroups $K_v$, and there is a short exact sequence
\begin{equation}\label{ade}
1\longrightarrow \bigoplus_v {\BC}^\times\longrightarrow \widetilde{\textrm{Sp}}_{2n}({\BA}\langle t\rangle)\longrightarrow \textrm{Sp}_{2n}({\BA}\langle t\rangle)\longrightarrow 1.
\end{equation}
The idele group ${\BA}^\times $ acts on $\widetilde{\textrm{Sp}}_{2n}({\BA}\langle t\rangle)$ by rotation of $t$.

For almost all finite places $v$ there is $\phi_{0,v}\in \mathcal{S}(V_{-,v})$ fixed by $K_v$, hence the restricted tensor product $\mathcal{S}(V_{-,\BA}):=\bigotimes'_v \mathcal{S}(V_{-,v})$ with respect to $\{\phi_{0,v}\}$ is a representation of $\widetilde{\textrm{Sp}}_{2n}(\BA\langle t\rangle)$, called the global Weil representation. Let us denote this representation by $\omega=\bigotimes'_v\omega_v$.
We need to further introduce a suitable subrepresentation, on which the theta functional
\begin{equation}\label{theta}
\phi\mapsto \theta(\phi)=\sum_{r\in V_{-,F}}\phi(r)
\end{equation}
defined in the usual way, is absolutely convergent.

For a finite place $v$, a subgroup of the Heisenberg group $H_v=F_v((t))^{2n}\times F_v$ is called a congruence subgroup if it contains $\varpi_v^k\mathcal{O}_v((t))^{2n}$ for some integer $k$, where $\varpi_v\in\mathcal{O}_v$ is a local parameter. A function $\phi_v\in \mathcal{S}(V_{-,v})$ is called elementary if it is bounded and fixed by a congruence subgroup of $H_v$. For example the function $\phi_0$ in Lemma \ref{fun} (i) is elementary. Let $\mathcal{E}(V_{-,\BA})$ be the space of functions on $V_{-,\BA}$ which are finite linear combinations of $\omega(g)\prod'_v \phi_v$, where
$g\in\widetilde{\textrm{Sp}}_{2n}(\BA\langle t\rangle)$, and the following hold:

\begin{itemize}

\item $\phi_v=\phi_{0,v}$ for almost all finite places $v$;

\item $\phi_v$ is elementary for each remaining finite place $v$;

\item $\phi_v=P_v\cdot \phi_{0,v}$ for each infinite place $v$, where $P_v$ is a polynomial function on some finite dimensional subspace $W_v[t^{-1},\ldots, t^{-N}]$ of $X_v$.

\end{itemize}
It is clear that $\mathcal{E}(V_{-,\BA})$ is a subrepresentation of $\mathcal{S}(V_{-,\BA})$.
We also have to introduce a sub-semigroup of ${\BA}^\times_{>1} $ by
consisting of ideles $q \in {\BA}^\times$ with $ |q | > 1 $.
Then we let
\begin{equation}
\mathcal{S}'(V_{-,\BA})={\BA}^\times_{>1} \cdot \mathcal{E}(V_{-,\BA})
\end{equation}
By the  result of \cite{LZ1}, we have

\begin{thm}\label{conv}
If $\phi \in  \mathcal{S}'(V_{-,\BA})$, then the theta series $\theta(\phi)$ converges absolutely and is invariant under $\mathrm{Sp}_{2n}({\BQ} \langle t\rangle)$.
\end{thm}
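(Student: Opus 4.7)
The plan is to deduce absolute convergence from the built-in loop-rotation dilation $q$ with $|q|_\BA>1$, and then to derive $\Sp_{2n}(\BQ\pair{t})$-invariance from Poisson summation on generators. By definition $\CS'(V_{-,\BA})=\BA^\times_{>1}\cdot\CE(V_{-,\BA})$, so it suffices to treat $\phi=\omega(q)\phi_1$ with $|q|_\BA>1$ and $\phi_1=\bigotimes'_v\phi_v$ a pure tensor of elementary local functions (characteristic of a lattice at almost all finite $v$, bounded and congruence-invariant at the remaining finite $v$, Schwartz times polynomial at the archimedean place). Writing a general rational point $r\in V_{-,\BQ}$ as $r=\sum_{k=1}^{N}w_k t^{-k}$ with $w_k\in W=\BQ^{2n}$, the rotation $t\mapsto qt$ sends $t^{-k}$ to $q^k t^{-k}$, so up to an overall scalar
\begin{equation*}
(\omega(q)\phi_1)(r)=\phi_1\Bigl(\sum_{k=1}^{N}q^k w_k\, t^{-k}\Bigr).
\end{equation*}

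\textbf{Absolute convergence.} First I would fix a truncation depth $N$ and work in the finite-dimensional layer $W[t^{-1},\ldots,t^{-N}]t^{-1}\cong(\BQ^{2n})^N$. Boundedness and congruence-invariance at each finite place, together with the standard integrality of $\phi_{0,v}$ at almost all finite $v$, force the nonzero rational tuples $(w_1,\ldots,w_N)$ to lie in a fixed $\BZ$-lattice $L\subset(\BQ^{2n})^N$ that is independent of $q$; at the archimedean place, $\phi_\infty=P\cdot\phi_{0,\infty}$ provides Schwartz decay on this finite-dimensional layer. A routine Schwartz-majorant argument then yields a bound of the form
\begin{equation*}
\sum_{(w_1,\ldots,w_N)\in L}\Bigl|\phi_1\Bigl(\sum_{k=1}^{N}q^k w_k\, t^{-k}\Bigr)\Bigr|\ \le\ C\prod_{k=1}^{N}|q|_\BA^{-ck}
\end{equation*}
for some constant $c>0$ independent of $N$, since the $q^k$-dilation in the $k$-th slot effectively shrinks the lattice by a factor $|q|^{-k}$ before the rapidly decreasing function is applied. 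Since $|q|_\BA>1$, letting $N\to\infty$ produces a convergent geometric series, which is the asserted absolute convergence.

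\textbf{Invariance and the main obstacle.} With absolute convergence in hand for every $\phi\in\CS'(V_{-,\BA})$, invariance under $\gamma\in\Sp_{2n}(\BQ\pair{t})$ is checked on generators. A Bruhat/Iwahori-type decomposition reduces the task to three families: (i) the Siegel parabolic preserving $V_+$, where $\omega(\gamma)$ acts by formula (\ref{gamma0}) and invariance is the product formula applied to the character values $\psi(\tfrac12\langle xa_\gamma,xb_\gamma\rangle)$ at rational $x\in V_{-,\BQ}$; (ii) the finite-dimensional subgroup $\Sp_{2n}(\BQ)\subset\Sp_{2n}(\BQ\pair{t})$, handled exactly as in the classical Weil argument by adelic Poisson summation on $\BQ^{2n}\subset\BA^{2n}$; (iii) a Weyl-type element exchanging $V_+$ and $V_-$, for which $\omega(\gamma)$ is, up to measure normalization, the adelic Fourier transform. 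The hardest step is (iii): it requires an adelic Poisson summation on the infinite-dimensional $\BQ$-vector space $V_{-,\BQ}\subset V_{-,\BA}$. The strategy is to truncate to the depth-$N$ layer, apply classical Poisson on the finite-dimensional lattice $L$, and then pass to $N\to\infty$ using the geometric bound of the previous paragraph as a dominating majorant. The interdependence of the two halves of the theorem—the convergence estimate is exactly what legitimizes the interchange of limits in the infinite-dimensional Poisson summation—is the main technical obstacle of the argument.
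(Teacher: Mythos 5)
First, note that the paper itself does not prove Theorem \ref{conv}: it is quoted from \cite{LZ1} (``By the result of \cite{LZ1}, we have\dots''), so your sketch has to be measured against the argument actually required there. Two genuine gaps appear in the convergence half. (a) Your reduction ``it suffices to treat $\phi=\omega(q)\phi_1$ with $\phi_1$ a pure tensor of elementary local functions'' silently discards the factor $\omega(g)$, $g\in\widetilde{\Sp}_{2n}({\bf A}\langle t\rangle)$, which is built into the definition of $\mathcal{E}(V_{-,\bf A})$: a general element of $\mathcal{S}'(V_{-,\bf A})$ is a combination of $\omega(q)\omega(g)\prod_v'\phi_v$. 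Since $T(g)$ involves a partial Fourier transform over $\mathrm{Im}\,c_g$, such a translate is no longer an elementary pure tensor, and establishing convergence for these translates is precisely the hard content of \cite{LZ1}; it cannot be waved away. (b) The displayed majorant $C\prod_{k=1}^{N}|q|_{\bf A}^{-ck}$ is false as stated: the single term $(w_1,\dots,w_N)=0$ already contributes $|\phi_1(0)|$, so the layer sum cannot tend to $0$. What one can hope for is a bound of the shape $\prod_k\bigl(1+C\,|q|^{-ck}\bigr)$, uniform in $N$, plus a correct accounting of the fact that the depth-$N$ layers are nested rather than disjoint (e.g.\ summing over the exact top degree), and uniform control of the archimedean Gaussian constants across layers. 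Your ``convergent geometric series'' conclusion does not follow from the bound you wrote.

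The invariance half also misreads the structure of the group. Every $\gamma\in\Sp_{2n}({\bf Q}\langle t\rangle)\subset\Sp(W((t)),V_+)$ has $\mathrm{Im}\,c_\gamma$ finite-dimensional (the paper states this after (\ref{matrix})), so there is no element interchanging $V_+$ and $V_-$: such a map does not preserve commensurability with $V_+$. Consequently your step (iii), an infinite-dimensional adelic Poisson summation for a global Fourier transform, concerns a generator that does not exist, and the ``main technical obstacle'' you identify is not present in that form. Likewise in step (ii), constant loops $\gamma\in\Sp_{2n}({\bf Q})$ preserve both $V_+$ and $V_-$, hence have $b_\gamma=c_\gamma=0$ and act by the substitution $\phi(x)\mapsto\phi(x a_\gamma)$; invariance there is a mere rearrangement of an absolutely convergent sum, with no Poisson summation. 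The elements that do require Poisson summation are those with $c_\gamma\neq 0$ (e.g.\ affine Weyl reflections), and for them the summation is over the finite-dimensional space $\mathrm{Im}\,c_\gamma$, justified by the absolute convergence already established. So the correct division of labor is the reverse of yours: invariance is the comparatively routine part (character rationality for $c_\gamma=0$, finite-dimensional Poisson summation otherwise), while the real difficulty, which your proposal skips, is absolute convergence for arbitrary $\omega(g)$-translates of elementary tensors.
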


\

\section{An Alternative Model.}\label{S3}

In this section we construct a different model of the representation ${\cal S} ( V_- )$ of $\widetilde{\GL_n} ( F(( t))) $
 for a local field $F$, which is due to \cite{Ka}. 
 The local models fit to an adelic model. It will be used to compute the theta lifting.

\subsection{Lattices}
Let $F$ be a local field, $L_0=F^n[[t]]$ in $X=F^n((t))$. As before vectors in $X$ are row vectors. A \textbf{lattice} $L$ in $F((t))^n$ is a free $F[[t]]$-submodule of rank $n$ in $F((t))^n$ that is comeasurable with $L_0$ i.e. \begin{equation}
	\dim_FL_0/(L_0\cap L)<\infty,\,\dim_F L/(L_0\cap L)<\infty
\end{equation} Let $\Gr_n(F)$ be the set of all lattices in $F((t))^n$. $\GL_n(F((t)))$ acts transitively on $\Gr_n(F)$ on the right, the stabilizer of $L_0$ is the subgroup $\GL_n(F[[t]])$. This gives a bijection of sets \begin{equation}
	\Gr_n(F)\cong\GL_n(F[[t]])\backslash \GL_n(F((t)))
\end{equation}
\begin{lem}
	For any $L\in\Gr_n(F)$, there exists $M,N>0$ such that \begin{equation}
		t^{M}F[[t]]^n\subseteq L\subseteq t^{-N}F[[t]]^n
	\end{equation}
\end{lem}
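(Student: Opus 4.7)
The plan is to exploit the structure of finitely generated modules over the discrete valuation ring $F[[t]]$, using the commensurability built into the definition of a lattice. The key observation is that $L_0 \cap L$ is naturally an $F[[t]]$-submodule of both $L_0$ and $L$, so both quotients $L_0/(L_0 \cap L)$ and $L/(L_0 \cap L)$ inherit an $F[[t]]$-module structure. By the hypothesis of commensurability stated in the definition of $\Gr_n(F)$, both of these quotients are finite-dimensional as $F$-vector spaces, hence a fortiori finitely generated as $F[[t]]$-modules.

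First, I would apply the structure theorem for finitely generated modules over the DVR $F[[t]]$ to $Q_0 := L_0/(L_0 \cap L)$. Such a module decomposes as a direct sum of a free part $F[[t]]^r$ and a torsion part of the form $\bigoplus_i F[[t]]/(t^{a_i})$. Since $F[[t]]$ is itself infinite-dimensional over $F$, the condition $\dim_F Q_0 < \infty$ forces $r = 0$, so $Q_0$ is torsion. Hence some $t^M$ annihilates $Q_0$, which translates directly to $t^M L_0 \subseteq L_0 \cap L \subseteq L$, giving one of the desired inclusions.

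Next, I would repeat the identical argument applied to $Q := L/(L_0 \cap L)$, producing some $N \geq 0$ with $t^N L \subseteq L_0 \cap L \subseteq L_0$, which rearranges to $L \subseteq t^{-N} L_0$. Combining the two inclusions yields $t^M L_0^n \subseteq L \subseteq t^{-N} L_0^n$ as claimed (after possibly enlarging $M$ and $N$ to be positive).

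There is essentially no serious obstacle here: once one recognizes that finite $F$-dimensionality forces torsion over the DVR $F[[t]]$, the argument is forced. The only mild subtlety to keep in mind is verifying that $L_0 \cap L$ is genuinely an $F[[t]]$-submodule of each (immediate, as intersection of $F[[t]]$-submodules) so that the quotients carry the required module structure on which $t$ can act.
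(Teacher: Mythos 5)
Your argument is correct; the paper in fact states this lemma without proof, so there is nothing to compare it against. The reasoning is sound at every step: $L_0 \cap L$ is an $F[[t]]$-submodule of both $L_0$ and $L$; the quotients $L_0/(L_0 \cap L)$ and $L/(L_0 \cap L)$ are finitely generated over $F[[t]]$ (being quotients of the free rank-$n$ modules $L_0$ and $L$); commensurability makes them finite-dimensional over $F$; and over the DVR $F[[t]]$ a finitely generated module that is finite-dimensional over $F$ must be torsion with no free summand, hence killed by some $t^M$. The two inclusions $t^M L_0 \subseteq L_0\cap L \subseteq L$ and $t^N L \subseteq L_0\cap L \subseteq L_0$ then yield exactly the desired sandwich. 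One small side remark: you do not actually need the full structure theorem; since the chain $Q_0 \supseteq tQ_0 \supseteq t^2 Q_0 \supseteq \cdots$ of $F$-subspaces must stabilize by finite-dimensionality, Nakayama's lemma applied to $t^M Q_0 = t\cdot t^M Q_0$ over the local ring $F[[t]]$ gives $t^M Q_0 = 0$ directly. Either route is standard and complete. (There is a harmless typo near the end where $t^M L_0^n$ should read $t^M L_0 = t^M F[[t]]^n$, and the remark that $M,N$ can be enlarged to be positive handles the degenerate case $L_0 \subseteq L$ or $L \subseteq L_0$ correctly.)
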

\par For each pair of lattices $L_1,L_2$ in $\Gr_n(F)$ we define an $\BR_{>0}$-torsor $\mu(L_1,L_2)$ as follows: First, for lattices $L_1\subseteq L_2$, we define $\mu(L_1,L_2)$ to be the $\BR_{>0}$-torsor of Haar measures on $L_2/L_1$. For general lattices $L_1,L_2$, there exists lattice $L$ s.t. $L\subseteq L_1$ and $L\subseteq L_2$, and we define $\mu(L_1,L_2)$ to be $$\mu(L,L_1)^*\times_{\BR_{>0}}\mu(L,L_2)$$ where $\mu(L,L_1)^*=\Hom_{\BR_{>0}}(\mu(L,L_1),\BR_{>0})$ is the dual $\BR_{>0}$-torsor, and for two $\BR_{>0}$-sets $M,N$, we define $M\times_{\BR_{>0}}N$ to be the set of equivalence classes of $M\times N$ with equivalence relation generated by $(r\cdot m,r^{-1}\cdot n)=(m,n)$ for $r\in\BR_{>0},\,m\in M,\,n\in N$. It has a natural $\BR_{>0}$-action by $r\cdot[m,n]:=[r\cdot m,n]$. If $M,N$ are $\BR_{>0}$-torsors, so is $M\times_{\BR_{>0}}N$.
\begin{lem}
	$\mu(L_1,L_2)$ is well-defined, namely for any $L,L'\subseteq L_1\cap L_2$, there exists a canonical isomorphism $$\mu(L,L_1)^*\times_{\BR_{>0}}\mu(L,L_2)\cong \mu(L',L_1)^*\times_{\BR_{>0}}\mu(L',L_2) $$
\end{lem}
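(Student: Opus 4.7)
The plan is to reduce the general case to the nested one, where the isomorphism comes from a short exact sequence of finite-dimensional $F$-vector spaces and the standard fact that Haar-measure torsors multiply along short exact sequences.

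First, I would reduce to the case $L \subseteq L'$. Given arbitrary $L, L' \subseteq L_1 \cap L_2$, set $L'' = L \cap L'$. Then $L'' \subseteq L \cap L_i$ and $L'' \subseteq L' \cap L_i$ for $i = 1, 2$, so $L''$ is contained in both $L$ and $L'$, and in $L_1 \cap L_2$. Provided the nested statement is proved, one obtains canonical isomorphisms
\[
\mu(L,L_1)^* \times_{\BR_{>0}} \mu(L,L_2) \cong \mu(L'',L_1)^* \times_{\BR_{>0}} \mu(L'',L_2) \cong \mu(L',L_1)^* \times_{\BR_{>0}} \mu(L',L_2),
\]
and the composition is the sought-after isomorphism.

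Next, I would treat the nested case $L \subseteq L' \subseteq L_1 \cap L_2$. The short exact sequences of finite-dimensional $F$-vector spaces
\[
0 \to L'/L \to L_i/L \to L_i/L' \to 0, \qquad i=1,2,
\]
yield canonical isomorphisms of $\BR_{>0}$-torsors of Haar measures
\[
\mu(L,L_i) \cong \mu(L,L') \times_{\BR_{>0}} \mu(L',L_i),
\]
via the rule $\mathrm{vol}(L_i/L) = \mathrm{vol}(L'/L)\cdot\mathrm{vol}(L_i/L')$ (compatibility of Haar measures with quotients). Taking the dual of the $i=1$ isomorphism and combining with the $i=2$ one gives
\[
\mu(L,L_1)^* \times_{\BR_{>0}} \mu(L,L_2) \cong \mu(L,L')^* \times_{\BR_{>0}} \mu(L',L_1)^* \times_{\BR_{>0}} \mu(L,L') \times_{\BR_{>0}} \mu(L',L_2),
\]
and the canonical pairing $\mu(L,L')^* \times_{\BR_{>0}} \mu(L,L') \to \BR_{>0}$ (sending $[\lambda^*,\lambda] \mapsto 1$) trivializes the first two factors, producing the desired isomorphism.

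The step requiring the most care is verifying that the construction is genuinely canonical, i.e.\ that the composition in the reduction step does not depend on the choice of intermediate lattice $L''$. Concretely, given three lattices $L, L', L''$ all contained in $L_1 \cap L_2$, with $L \cap L' \cap L''$ as a common refinement, the two ways of identifying $\mu(L,L_1)^* \times_{\BR_{>0}} \mu(L,L_2)$ with $\mu(L'',L_1)^* \times_{\BR_{>0}} \mu(L'',L_2)$ (directly, or via $L'$) must agree. This is a cocycle check that follows from associativity of the short exact sequence isomorphism: for a tower $L \subseteq L' \subseteq L'' \subseteq L_i$, the isomorphism $\mu(L,L_i) \cong \mu(L,L') \times_{\BR_{>0}} \mu(L',L'') \times_{\BR_{>0}} \mu(L'',L_i)$ can be bracketed in either order and yields the same result, because it ultimately reflects the additivity of dimension in a filtered finite-dimensional vector space. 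This coherence — rather than the existence of the maps — is the main technical content, but it reduces to a standard compatibility for Haar measures on short exact sequences.
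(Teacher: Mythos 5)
The paper states this lemma without supplying a proof, so there is no argument of record to compare against; your task was effectively to fill in the gap. Your proof is correct, and it is consistent in spirit with the technique the paper uses one step later (the proof of the composition isomorphism $\mu(L_1,L_2)\times_{\BR_{>0}}\mu(L_2,L_3)\cong\mu(L_1,L_3)$ also picks an auxiliary common sublattice and then cancels a dual pair). The two essential ingredients you identify are exactly right: (a) reduce the general comparison to the nested case via the common sublattice $L''=L\cap L'$ (which is again in $\Gr_n(F)$, since $F[[t]]$ is a PID and $L\cap L'$ is commensurable with $L_0$), and (b) in the nested case, use the Fubini isomorphism $\mu(L,L')\times_{\BR_{>0}}\mu(L',L_i)\xrightarrow{\sim}\mu(L,L_i)$ attached to the short exact sequence $0\to L'/L\to L_i/L\to L_i/L'\to 0$, dualize for $i=1$, and cancel the $\mu(L,L')^*\times_{\BR_{>0}}\mu(L,L')$ factor. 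You also correctly flag that canonicity comes down to a cocycle/associativity check for the Fubini isomorphism along towers of lattices, which is where the real (if routine) work sits.

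Two small imprecisions worth fixing. First, the ``rule $\mathrm{vol}(L_i/L)=\mathrm{vol}(L'/L)\cdot\mathrm{vol}(L_i/L')$'' does not literally make sense when $F$ is archimedean, since $L'/L$ and $L_i/L'$ are then non-compact; what you mean is the Fubini/iterated-integral characterization of the product measure, which is the correct general statement. Second, the canonical trivialization $\mu(L,L')^*\times_{\BR_{>0}}\mu(L,L')\xrightarrow{\sim}\BR_{>0}$ sends $[\lambda^*,\lambda]\mapsto\lambda^*(\lambda)$, not $[\lambda^*,\lambda]\mapsto 1$; it is an isomorphism of torsors, and the well-definedness on classes $[\lambda^*,\lambda]$ is exactly what makes the cancellation legitimate. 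Neither slip affects the validity of the argument.
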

\par In particular, for any lattice $L\in\Gr_n(F)$, $\mu(L,L)$ is canonically isomorphic to $\BR_{>0}$.
\begin{proof}
	We will use the following lemma, whose proof is straightforward: 
	\begin{lem}
	Let $0\to V_1\xrightarrow{i} V_2\xrightarrow{p} V_3\to 0$ be a short exact sequence of finite-dimensional $F$-vector spaces. Let $dv_1$ be a Haar measure on $V_1$, $dv_3$ be a Haar measure on $V_3$. For any Schwartz function $f\in\CS(V_2)$, let $p_!f$ be the function on $V_3$ defined by integration on the fibre $$(p_!f)(v_3)=\int_{V_1}f(\widetilde v_3+v_1)dv_1$$ where $\widetilde v_3$ is an arbitrary element in $p^{-1}(v_3)$. Then $p_!f\in\CS(V_3)$, and the functional $$f\mapsto \int_{V_3}(p_!f)(v_3)dv_3$$ defines a Haar measure on $V_2$. If we let $\mu(V)$ be the $\BR_{>0}$-torsor of Haar measures on $V$ for every finite-dimensional $F$-vector space $V$, then the above construction gives a canonical isomorphism $$\mu(V_2)\cong \mu(V _1)\times_{\BR_{>0}}\mu(V_3).$$
\end{lem} With this lemma, it is easy to see that for any lattice $L\subseteq L_1\cap L_2$, we have $$\mu(L,L_1)^*\times_{\BR_{>0}}\mu(L,L_2)\cong \mu(L_1\cap L_2,L_1)^*\times_{\BR_{>0}}\mu(L_1\cap L_2,L_2)$$
\end{proof}

\begin{prop}
	For lattices $L_1,L_2,L_3$ in $\Gr_n(F)$, we have a canonical isomorphism of $\BR_{>0}$-torsors \begin{equation}
		\mu(L_1,L_2)\times_{\BR_{>0}}\mu(L_2,L_3)\xrightarrow{\sim}\mu(L_1,L_3)
	\end{equation} In particular, $\mu(L_1,L_2)\times_{\BR_{>0}}\mu(L_2,L_1)$ is canonically isomorphic to $\BR_{>0}$.
\end{prop}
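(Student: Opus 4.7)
The plan is to reduce to the case of a common sublattice $L\subseteq L_1\cap L_2\cap L_3$ and then cancel the middle factor using the canonical evaluation pairing between an $\BR_{>0}$-torsor and its dual. Such a common sublattice exists: iterating the existence of pairwise common sublattices (or directly checking that the intersection of finitely many lattices in $\Gr_n(F)$ is again a lattice, using that $F[[t]]$ is a PID and that $t^M L_0\subseteq L_i\subseteq t^{-N}L_0$ for suitable $M,N$) produces $L$.

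With $L$ fixed, unfolding the definitions gives
\begin{align*}
\mu(L_1,L_2)\times_{\BR_{>0}}\mu(L_2,L_3)
&\cong \mu(L,L_1)^*\times_{\BR_{>0}}\mu(L,L_2)\times_{\BR_{>0}}\mu(L,L_2)^*\times_{\BR_{>0}}\mu(L,L_3)\\
&\cong \mu(L,L_1)^*\times_{\BR_{>0}}\mu(L,L_3)\\
&= \mu(L_1,L_3),
\end{align*}
where the second isomorphism cancels the middle factor via the canonical pairing $\mu(L,L_2)\times_{\BR_{>0}}\mu(L,L_2)^*\xrightarrow{\sim}\BR_{>0}$, $(\nu,\nu^*)\mapsto\nu^*(\nu)$. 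The associativity and symmetry of the balanced product $\times_{\BR_{>0}}$ up to canonical isomorphism are used freely here. This produces a candidate isomorphism $\mu(L_1,L_2)\times_{\BR_{>0}}\mu(L_2,L_3)\xrightarrow{\sim}\mu(L_1,L_3)$.

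The remaining, and main, task is to verify that this candidate is independent of the auxiliary choice of $L$, which is what makes it ``canonical''. Given a second common sublattice $L'\subseteq L_1\cap L_2\cap L_3$, I would pass to a further common sublattice $L''\subseteq L\cap L'$ and invoke the preceding lemma to identify each factor $\mu(L,L_i)^*\times_{\BR_{>0}}\mu(L,L_j)$ with its $L''$-version via the short exact sequences $0\to L/L''\to L_i/L''\to L_i/L\to 0$, and similarly for $L'$. A direct diagram chase, whose content is the multiplicativity of Haar measure volumes along such short exact sequences, shows that the two evaluation pairings (over $L$ and over $L'$) are intertwined by these identifications. I expect this bookkeeping to be the principal obstacle: each factor carries several layers of balanced products, and one must track how the evaluation map transforms when the base sublattice is refined. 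The core fact that saves the day is that the ratios of volumes on quotients $L_i/L''$ satisfy the expected Fubini-type relation across a filtration $L''\subseteq L\subseteq L_i$, reducing the coherence check to a finite-dimensional statement.

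Finally, the ``in particular'' assertion follows by specializing to $L_3=L_1$ and noting that $\mu(L_1,L_1)$ is canonically identified with $\BR_{>0}$ (most cleanly by taking $L=L_1$ in the defining expression, whereupon $\mu(L,L)^*\times_{\BR_{>0}}\mu(L,L)$ is the standard pairing of an $\BR_{>0}$-torsor with its dual). Combined with the composition isomorphism just established, this yields the canonical trivialization of $\mu(L_1,L_2)\times_{\BR_{>0}}\mu(L_2,L_1)$.
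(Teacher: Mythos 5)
Your proposal takes essentially the same route as the paper: pass to a common sublattice $L\subseteq L_1\cap L_2\cap L_3$, rewrite each $\mu(L_i,L_j)$ via the defining formula, and cancel the middle factor $\mu(L,L_2)\times_{\BR_{>0}}\mu(L,L_2)^*$ using the evaluation pairing. The paper states this quite tersely and does not spell out the independence of the choice of $L$; your additional discussion of that coherence check (reducing to the Fubini-type multiplicativity of volumes across a filtration $L''\subseteq L\subseteq L_i$) is correct and fills in a detail the paper leaves implicit, but it is not a different method.
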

\begin{proof}
Take a lattice $L\subseteq L_1 \cap L_2 \cap L_3$, then we have $$\mu(L_1,L_2)\cong\mu(L,L_1)^*\times_{\BR_{>0}}\mu(L,L_2)$$ $$\mu(L_2,L_3)\cong\mu(L,L_2)^*\times_{\BR_{>0}}\mu(L,L_3)$$ $$\mu(L_1,L_3)\cong\mu(L,L_1)^*\times_{\BR_{>0}}\mu(L,L_3)$$ The isomorphism then follows from the duality between $\mu(L,L_2)$ and $\mu(L,L_2)^*$.
\end{proof}
\subsection{Schwartz Functions.}
For $L_1,L_2\in\Gr_n(F)$ with $L_1\subseteq L_2$, we define $\CS(L_1,L_2)$ to be the $\BC$-vector space \begin{equation}
	\CS(L_2/L_1)\times_{\BR_{>0}}\mu(L_1,L_0)
\end{equation} where $\CS(L_2/L_1)$ is the space of Schwartz functions on the finite-dimensional $F$-vector space $L_2/L_1$, on which $\BR_{>0}$ acts naturally by scalar multiplications. An element in $\CS(L_1,L_2)$ will be denoted $f\otimes\mu$ with $f\in\CS(L_2/L_1)$ and $\mu\in\mu(L_1,L_0)$.
\par For lattices $L_1\subseteq L_2\subseteq L_3$ in $\Gr_n(F)$ we have a short exact sequence of finite-dimensional $F$-vector spaces \begin{equation}
	0\to L_2/L_1 \xrightarrow{i}L_3/L_1\xrightarrow{p}L_3/L_2\to 0
\end{equation} and we define two $F$-linear maps $i^*:\CS(L_1,L_3)\to\CS(L_1,L_2)$ and $p_*:\CS(L_1,L_3)\to\CS(L_2,L_3)$ as follows: for $f(x)\otimes\mu\in\CS(L_1,L_3)$, define $$i^*(f\otimes\mu)=f(i(x))\otimes\mu$$ As for $p_*$, under the canonical isomorphism of Proposition 1.1.3 we can write $\mu=\mu_1\mu_2$ with $\mu_1\in\mu(L_1,L_2),\,\mu_2\in\mu(L_2,L_0)$, then we define $p_*(f\otimes\mu)=g\otimes\mu_2$ where $g\in\CS(L_3/L_2)$ is given by $$g(x)=\int_{L_2/L_1}f(x+y)d\mu_1(y)$$ This is well-defined because a different choice of the decomposition of $\mu$ only involves a scalar dilation, which results in a simultaneous dilation in the integration and in the measure.
\par A family $(\phi_{L,L'})_{L\subset L'}$ indexed by pairs of lattices $L,L'\in\Gr_n(F)$ with $L\subseteq L'$ is called a \textbf{compatible family} if for any lattices $L_1,L_2,L_3\in\Gr_n(F)$ with $L_1\subseteq L_2\subseteq L_3$, let $i,p$ be the natural maps in the short exact sequence (3.6), then we have \begin{equation}
	i^*(\phi_{L_1,L_3})=\phi_{L_1,L_2},\,p_*(\phi_{L_1,L_3})=\phi_{L_2,L_3}
\end{equation} Let $\CS(X,L_0)$ be the set of compatible families $(\phi_{L,L'})_{L\subseteq L'}$, called the space of \textbf{Schwartz functions} of $(X,L_0)$. It has an obvious $\BC$-vector space structure coming from the $\BC$-vector space structures on the various $\CS(L,L')$.
\par A subset $\CN=\{(L,L')\in\Gr_n\times\Gr_n:L\subseteq L'\}$ is called a \textbf{net} if for any pair $L_1,L_2\in\Gr_n$ with $L_1\subseteq L_2$, there exists $(L,L')\in N$ such that $L\subseteq L_1\subseteq L_2\subseteq L'$. A compatible family is determined by a compatible family parameterized by $\CN$. To see this, suppose we have a compatible family $(\phi_{L,L'}\otimes\mu_{L,L'})_{(L,L')\in\CN}$ parameterized by $\CN$. For any pair $L_1\subseteq L_2$ of lattices, there exists $(L,L')\in\CN$ such that $L\subseteq L_1\subseteq L_2\subseteq L'$. Then we first consider the lattices $L\subseteq L_2\subseteq L'$, define $\phi_{L,L_2}=i^*(\phi_{L,L'})$, $\mu_{L,L_2}=\mu_{L,L'}$ Then consider the lattices $L\subseteq L_1\subseteq L_2$, define $\phi_{L_1,L_2}=p_*(\phi_{L,L_2})$ and $\mu_{L_1,L_2}$ from the decomposition of $\mu_{L,L'}$ via the isomorphism (3.4). As explained above, the element $\phi_{L_1,L_2}\otimes\mu_{L_1,L_2}$ is well-defined. Then we obtain a compatible family $\{\phi_{L_1,L_2}\otimes\mu_{L_1,L_2}\}$, whose compatibility follows from the compatibility on $\CN$. 
\par We will often make use of the following net \begin{equation}
	\{(t^NL_0,t^{-M}L_0):M,N\in\BZ_{\geq0}\}
\end{equation}
An element labelled by this net is usually denoted $$(f_{N,-M}\otimes\mu_{N,-M})_{M,N\in\BZ_{\geq0}}$$ with $f_{N,-M}\in\CS(t^{-M}L_0/t^NL_0)$ and $\mu_{N,-M}\in\mu(t^NL_0,L_0)$.

\subsection{The Loop Group $\widetilde\GL_n(F((t)))\ltimes\sigma(F^*)$.} Let \begin{equation}
	\GL(X,L_0):=\{g\in\GL_F(X):\text{$L_0g$ and $L_0$ are commesurable}\}
\end{equation}
 Clearly $\GL(X,L_0)$ is a group. Consider the set
 \begin{equation}
	\widetilde\GL(X,L_0)=\{(g,\nu):g\in\GL(X,L_0),\,\nu\in\mu(L_0g^{-1},L_0)\}
\end{equation}
It admits a multiplication defined by \begin{equation}
	(g_1,\nu_1)(g_2,\nu_2)=(g_1g_2,(\nu_2g_1^{-1})\nu_1)
\end{equation}
where $\nu_2g_1^{-1}$ is the image of $\nu_2$ under the map $$g_1^{-1}:\mu(L_0g_2^{-1},L_0)\to\mu(L_0g_2^{-1}g_1^{-1},L_0g_1^{-1})$$ and
the product with $\nu_1$ means the image under the canonical isomorphism $$\mu(L_0g_2^{-1}g_1^{-1},L_0g_1^{-1})\times_{\BR_{>0}}\mu(L_0g_1^{-1},L_0)\to\mu(L_0(g_1g_2)^{-1},L_0)$$
\par The kernel of the canonical projection $\widetilde\GL(X,L_0)\to\GL(X,L_0)$ is the central subgroup $\{(1,\nu):\nu\in\mu(L_0,L_0)\}$ of $\widetilde\GL(X,L_0)$ which is isomorphic to the multiplicative group $\BR_{>0}$, so $\widetilde\GL_n(X,L_0)$ is a central extension of $\GL_n(X,L_0)$ by $\BR_{>0}$. See \cite{Li} for similar construction.
\par Clearly $\GL_n(F((t)))$ is a subgroup of $GL(X,L_0)$. For each $q\in F^*$, the map
$$\sigma(q):F^n((t))\to F^n((t)),\,\sum_{k\in\BZ}a_kt^k\mapsto\sum_{k\in\BZ}a_kq^{-k}t^k$$
is also an element in $\GL(X,L_0)$, and the map $\sigma:F^*\to\GL(X,L_0)$ is an injective homomorphism, whose image $\sigma(F^*)$ normalizes $\GL_n(F((t)))$. Since the central extension canonically splits over $\sigma(F^*)$, we can view it as a subgroup of $\widetilde\GL(X,L_0)$ still denoted $\sigma(F^*)$. Let $\widetilde\GL_n(F((t)))$ be the preimage of $\GL_n(F((t)))$ in the central extension $\widetilde\GL(X,L_0)$. We will prove
 $\widetilde\GL(X,L_0)$  isomorphic to the group $\widetilde\GL_n(F((t)))$ defined in Section 2.1.
We can form the semi-direct product $\widetilde\GL_n(F((t)))\ltimes\sigma(F^*)$, which is a subgroup of $\widetilde\GL_n(F((t)))$. The (right) action of $\sigma(F^*)$ on $\widetilde\GL_n(F((t)))$ is given by the formula
$$\sigma(q^{-1})(g(t),\nu)\sigma(q)=(g(q^{-1}t),\mu),\,q\in F^*,g(t)\in\GL_n(F((t))),\nu\in\mu(L_0g(t),L_0)$$
\begin{lem}
	The central extension $\widetilde\GL_n(F((t)))$ splits over the subgroup $\GL_n(\CO((t)))$ of $\GL_n(F((t)))$, where $\CO$ is the ring of integers of $F$.
\end{lem}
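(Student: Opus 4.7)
The plan is to construct an explicit splitting $s: \GL_n(\CO((t))) \to \widetilde\GL_n(F((t)))$, i.e.\ a multiplicative assignment $g \mapsto (g,\nu(g))$ with $\nu(g) \in \mu(L_0 g, L_0)$, by exploiting the $\CO$-integral structure on $F^n((t))$ that is left invariant by the subgroup $\GL_n(\CO((t)))$.

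The key observation is that any $g \in \GL_n(\CO((t)))$ acts as an $\CO((t))$-linear automorphism of $F^n((t))$ and therefore preserves the $\CO((t))$-submodule $\CO^n((t))$. Setting $L_0^\CO := \CO^n[[t]]$, one checks that $L_0 \cap \CO^n((t)) = L_0^\CO$ and $L_0 g \cap \CO^n((t)) = L_0^\CO g$, so both $L_0$ and $L_0 g$ are the $F$-spans of canonical, commensurable $\CO[[t]]$-lattices sitting inside $\CO^n((t))$. This is a feature specific to $\GL_n(\CO((t)))$ and fails for general $g \in \GL_n(F((t)))$.

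Using this structure, I would define $\nu(g)$ as follows. Let $L := L_0 \cap L_0 g$; then the images of $L_0^\CO$ and $L_0^\CO g$ in the finite-dimensional $F$-vector spaces $L_0/L$ and $L_0 g/L$ are finitely generated torsion-free $\CO$-modules whose $F$-spans fill the respective quotients (this uses commensurability of $L_0^\CO$ and $L_0^\CO g$ as $\CO[[t]]$-lattices). Each such image determines a canonical Haar measure by the normalization that it has volume $1$, and combining these via the presentation $\mu(L_0 g, L_0) = \mu(L, L_0 g)^* \times_{\BR_{>0}} \mu(L, L_0)$ yields a canonical $\nu(g)$. As a sanity check, when $g \in \GL_n(\CO[[t]])$ we have $L_0 g = L_0$ and $L_0^\CO g = L_0^\CO$, so $\nu(g) = 1 \in \mu(L_0, L_0) \cong \BR_{>0}$, consistent with the evident splitting over this subgroup.

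The hardest step will be verifying the cocycle identity $\nu(g_1 g_2) = (\nu(g_1) g_2)\nu(g_2)$. Morally this holds because right multiplication by $g_2$ carries $L_0^\CO$ to $L_0^\CO g_2$ and the volume-$1$ normalizations transport correctly, but the actual check requires picking a common refinement $\CO[[t]]$-lattice contained in $L_0^\CO \cap L_0^\CO g_1 \cap L_0^\CO g_1 g_2$, expressing both sides as classes in the relevant $\BR_{>0}$-torsors through this refinement, and chasing the natural identifications across a diagram of short exact sequences relating $L_0$, $L_0 g_1$, $L_0 g_1 g_2$, and $L_0 g_2$. A secondary subtlety, needed already in the definition of $\nu(g)$, is verifying that the images of $L_0^\CO$ in quotients like $L_0/L$ are genuine full-$F$-rank $\CO$-lattices rather than torsion-free $\CO$-modules of possibly infinite rank.
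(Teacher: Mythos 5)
The paper states this lemma without proof (it follows it only with a remark identifying the extension with the one given by the tame symbol), so there is no proof in the paper to compare against; I will assess your argument on its own merits.

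Your proof is correct, and it is the natural argument. The central point you identify --- that $\GL_n(\CO((t)))$ preserves $\CO^n((t))$, and hence that $L_0 g \cap \CO^n((t)) = L_0^{\CO} g$ (verified by the two inclusions you mention) supplies each lattice $L_0 g$ in the $\GL_n(\CO((t)))$-orbit with a canonical $\CO[[t]]$-integral structure --- is exactly what makes the splitting possible and genuinely fails for general $g$. Both of the ``subtleties'' you flag at the end do go through, and it may help you to have them spelled out. For the lattice claim: given a common sublattice $L \subseteq L_0 \cap L_0 g$, pick $N$ with $t^N L_0 \subseteq L$; then $L_0^{\CO} \cap L \supseteq t^N L_0^{\CO}$, so $L_0^{\CO}/(L_0^{\CO}\cap L)$ is a quotient of $L_0^{\CO}/t^N L_0^{\CO} \cong \CO^{nN}$, hence finitely generated over $\CO$; it is torsion-free because it injects into the $F$-vector space $L_0/L$; and it spans because already in $L_0/t^N L_0 \cong F^{nN}$ the image $\CO^{nN}$ spans (note one does not need $L_0^{\CO}\otimes_{\CO} F = L_0$, which is false --- only fullness in the finite quotient is needed). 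The same argument works for $L_0^{\CO}g$ in $L_0 g/L$. For the cocycle identity: working with any fixed $L = t^N L_0$ small enough, the normalizations $m(L, L')\in\mu(L, L')$ (giving the lattice $L'^{\CO}/(L'^{\CO}\cap L)$ volume $1$) satisfy $m(L_1,L_3) = m(L_1,L_2)\, m(L_2,L_3)$ for $L_1\subseteq L_2\subseteq L_3$ in the orbit, because the short exact sequence $0\to L_2^{\CO}/L_1^{\CO}\to L_3^{\CO}/L_1^{\CO}\to L_3^{\CO}/L_2^{\CO}\to 0$ is compatible with the Fubini product of measures in Proposition 3.1.3; and right translation by $g_2$ carries $m(L_1,L_2)$ to $m(L_1 g_2, L_2 g_2)$ precisely because $(L_i g_2)^{\CO} = L_i^{\CO} g_2$. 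Combining these two facts yields $\nu(g_1 g_2) = (\nu(g_1)g_2)\nu(g_2)$ directly, so $g\mapsto(g,\nu(g))$ is the desired homomorphic splitting, and your sanity check that it restricts to the trivial element over $\GL_n(\CO[[t]])$ is consistent.
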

\begin{rmk}
	$\widetilde\GL_n(F((t)))$ is isomorphic to the standard central extension of $\GL_n(F((t)))$ given by a Tame symbol in the sense of Steinberg, c.f. \cite{Li}.
\end{rmk}
\subsection{The Representation of $\GL(X,L_0)$ on $\CS(X,L_0)$.}
We define an action of $\widetilde\GL_n(F((t)))$ on $\CS(X,L_0)$ as follows: for $(g,\nu)\in\widetilde\GL_n(F((t)))$ and $\phi=(\phi_{L,L'}=f_{L,L'}\otimes\mu_{L,L'})_{L\subseteq L'}\subseteq\CS(V,L_0)$, we define $\pi(g,\nu)\phi$ to be the family $(h_{L,L'}\otimes\gamma_{L,L'})_{L\subseteq L'}$ with
\begin{equation}
	h_{L,L'}(x)=f_{Lg^{-1},L'g^{-1}}(xg),\,\gamma_{L,L'}=(\mu_{Lg^{-1},L'g^{-1}}g)\nu
\end{equation}
 One can verify that the family $(h_{L,L'}\otimes\gamma_{L,L'})_{L\subseteq L'}$ is still compatible, namely $\pi(g,\nu)$ is well-defined.
\begin{thm}
	$\pi$ is a representation of $\widetilde\GL(X,L_0)$ on $\CS(X,L_0)$.
\end{thm}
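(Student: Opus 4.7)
The plan is to verify three things: (1) for each $(g,\nu)\in\widetilde\GL(X,L_0)$, the transformed family $\pi(g,\nu)\phi$ genuinely lies in $\CS(X,L_0)$, i.e.\ is a compatible family of Schwartz-times-measure data; (2) the kernel element $(1,\nu)$ with $\nu\in\mu(L_0,L_0)\cong\BR_{>0}$ acts as the scalar $\nu$, so that the $\BR_{>0}$-action built into the definition of $\CS(X,L_0)$ is correctly extended; and (3) $\pi$ satisfies the homomorphism identity $\pi(g_1,\nu_1)\pi(g_2,\nu_2)=\pi\bigl((g_1,\nu_1)(g_2,\nu_2)\bigr)$ with respect to the group law defined in $(3.11)$.

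For step $(1)$, I would fix a chain $L_1\subseteq L_2\subseteq L_3$ and verify the compatibilities $i^*(h_{L_1,L_3}\otimes\gamma_{L_1,L_3})=h_{L_1,L_2}\otimes\gamma_{L_1,L_2}$ and $p_*(h_{L_1,L_3}\otimes\gamma_{L_1,L_3})=h_{L_2,L_3}\otimes\gamma_{L_2,L_3}$. Since right multiplication by $g$ is an $F$-linear automorphism of $X$ that carries the chain $L_1g^{-1}\subseteq L_2g^{-1}\subseteq L_3g^{-1}$ isomorphically onto the original chain, the induced inclusions and projections among the finite-dimensional quotients intertwine with pullback of $f_{L_ig^{-1},L_jg^{-1}}$ by $g$; a change-of-variable argument then handles the $p_*$ integration. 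On the measure side, one uses that the canonical decomposition $\mu(L_1,L_3)\cong\mu(L_1,L_2)\times_{\BR_{>0}}\mu(L_2,L_3)$ is natural under the action of $g$ on the torsors $\mu(-,-)$, so the $\BR_{>0}$-factors arising from $i^*$ and $p_*$ match. Schwartzness is automatic since pullback by a linear isomorphism preserves the Schwartz class on any finite-dimensional quotient.

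Step $(2)$ is a direct computation once one unwinds $h_{L,L'}$ and $\gamma_{L,L'}=(\mu_{L,L'}\cdot 1)\cdot\nu$; scalar dilation of the measure torsor is literally scalar multiplication by $\nu\in\BR_{>0}$ in the tensor product defining $\CS(L,L')$, so $\pi(1,\nu)\phi=\nu\cdot\phi$ as required.

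Step $(3)$ is the main obstacle and is essentially all bookkeeping of the $\BR_{>0}$-torsors. The function part is transparent: two successive pullbacks by $g_2$ and then $g_1$ give the single pullback by $g_1g_2$, and $f_{L(g_1g_2)^{-1},L'(g_1g_2)^{-1}}(xg_1g_2)$ appears on both sides. The delicate identity is
\[
(\mu_{L(g_1g_2)^{-1},L'(g_1g_2)^{-1}}g_1)\cdot\nu_1\cdot g_2\;\;\text{applied to}\;\;\nu_2\;=\;\mu_{L(g_1g_2)^{-1},L'(g_1g_2)^{-1}}(g_1g_2)\cdot(\nu_1g_2)\nu_2,
\]
which is exactly the prescription coming from $(g_1,\nu_1)(g_2,\nu_2)=(g_1g_2,(\nu_1g_2)\nu_2)$. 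This reduces to naturality of the $g_i$-actions on the torsors $\mu(-,-)$ together with associativity of $\times_{\BR_{>0}}$; it is the very same structural check that makes the group law on $\widetilde\GL(X,L_0)$ associative, so once the bookkeeping is unpacked the identity is canonical. I expect the bulk of the writing for this theorem to consist of recording these canonical identifications cleanly, with no substantive analytic content beyond what Section $3.1$--$3.3$ already established.
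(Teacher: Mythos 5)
Your proposal takes essentially the same route as the paper, which disposes of your step (1) — well‑definedness of $\pi(g,\nu)\phi$ as a compatible family — in the sentence immediately preceding the theorem, and then reduces the theorem itself to checking $\pi(g_1)\pi(g_2)=\pi(g_1g_2)$ ``by a direct computation.'' Your three‑step breakdown makes that computation explicit; step (2) is not strictly needed for the theorem (that the kernel acts by the scalar $\nu$ is a consequence, not a hypothesis), but it is a harmless sanity check. One caution on step (3): since $\pi(g_1,\nu_1)\pi(g_2,\nu_2)\phi$ means first apply $\pi(g_2,\nu_2)$ and then $\pi(g_1,\nu_1)$, the measure part at a pair $(L,L')$ unwinds to
\[
\Bigl[\bigl(\mu_{L(g_1g_2)^{-1},L'(g_1g_2)^{-1}}\cdot g_2\bigr)\nu_2\cdot g_1\Bigr]\nu_1,
\]
with the $g_2,\nu_2$ operations applied on the inside; your displayed identity reverses the roles of $(g_1,\nu_1)$ and $(g_2,\nu_2)$, which would place the intermediate objects in the wrong torsors (e.g.\ $\nu_1\in\mu(L_0g_1,L_0)$ cannot be multiplied against something living in $\mu(L,L_0g_2)$). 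The identification of the displayed expression with $\bigl(\mu_{L(g_1g_2)^{-1},L'(g_1g_2)^{-1}}\cdot(g_1g_2)\bigr)\bigl[(\nu_1g_2)\nu_2\bigr]$ then follows, exactly as you say, from equivariance of the canonical isomorphism $\mu(L_1,L_2)\times_{\BR_{>0}}\mu(L_2,L_3)\cong\mu(L_1,L_3)$ under the lattice‑moving action of $\GL(X,L_0)$ together with its associativity.
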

\begin{proof}
	This boils down to checking $\pi(g_1g_2)$ acts the same as $\pi(g_1)\pi(g_2)$, which can be done by a direct computation.
\end{proof}
Next we will to establish an isomorphism of $\pi$ with part of ${\cal S} (  V_-)$ in Section 3.2 and at the same time prove
 the isomorphism of two groups $\widetilde\GL_n(F((t)))$, one defined in Section 3.2 and the other defined in this section above. 
\par Let $W$ be the standard $2n$-dimensional symplectic space $F^{2n}$ over $F$,  with the symplectic form defined by
  \[ \pair{ (x_1,y_1),(x_2,y_2)} = x_1y_2^t-x_2y_1^t.\]
  \par  Let $W((t))$ , $ V_+ $, $V_- $ be as in Section 2.1, then $W((t)) $ is sympletic space as in (2.1), and ${\cal S} ( V_-) = \CS(F^n[t^{-1}]t^{-1}\oplus F^n[t^{-1}]t^{-1}))$. Let $W((t))=X\oplus Y$, where $X,Y\cong F^n((t))$ are the first and last $n$-copies of $F((t))$. As is indicated by this notation, we will identify the space $X=F^n((t))$ in this section with the first $n$-copies of $F((t))$ in $W((t))$. Let $Y_-=F^n[t^{-1}]t^{-1}\subseteq Y$. 
\par We define an isomorphism
  $$\Phi:\CS(F^n[t^{-1}]t^{-1}\oplus F^n[t^{-1}]t^{-1}))\to\CS(X,L_0)$$ as follows: take $f\in \CS(F^n[t^{-1}]t^{-1}\oplus F^n[t^{-1}]t^{-1})$, we are going to define a compatible family $((\Phi f)_{L,L'}\otimes\mu_{L,L'})$ indexed by the net $$\{(L,L')\in\Gr_n(F)\times\Gr_n(F):L\subseteq L_0\subseteq L'\}$$
   We first identify $L'/L$ with $L'/L_0\oplus L_0/L$ as follows: for $x\in L'$, write $x=x_-+x_+$ with $x_-\in F^n[t^{-1}]t^{-1}$ and $x_+\in F^n[[t]]$, then the identification sends $x+L\in L'/L$ to $(x_-+L_0)\oplus(x_++L)$. The symplectic form on $W((t))$ defines a perfect pairing  $\pair{-,-}: Y_-\times L_0\to F$. Define
   $$L^\perp=\{y\in Y_-:\pair{y,v}=0,\,\forall v\in L\}$$ Each $y\in L^\perp$ defines a linear functional on $L_0/L$ by the pairing, which gives a linear map $L^\perp\to (L_0/L)^*$. Since the pairing $\pair{-,-}: Y_-\times L_0\to F$ is perfect, this is a linear isomorphism, and the induced pairing on $L^\perp\times (L_0/L)$ is a perfect pairing. In particular, $L^\perp$ is finite-dimensional. 
\par We fix a non-zero additive character $\psi:F\to U(1)$ as in Section 2.1,
     then the locally compact abelian groups $L^\perp$ and $L_0/L$ are in duality under $(y,v)\mapsto\psi(\pair{y,v})$, so the $\BR_{>0}$-torsor $\mu(L^\perp)$ of Haar measures on $L^\perp$ is dual to the $\BR_{>0}$-torsor $\mu(L,L_0)$. We choose $\mu_{L,L'}\in\mu(L,L_0)$, let $\mu^*\in\mu(L^\perp)$ be the dual Haar measure to $\mu_{L,L'}$. Then we define
   \begin{equation}
	(\Phi f)_{L,L'}(x_-,x_+)=\int_{L^\perp}f(x_-+y)\psi(-\pair{y,x_+})d\mu^*(y)
\end{equation}
Note that this is just the partial Fourier transform of $f$ on $L^\perp$. 
\par To simplify the exposition, we make use of the net (3.8), so given $f\in\CS(F^n[t^{-1}]t^{-1}\oplus F^n[t^{-1}]t^{-1})$, we take $L=t^{N}L_0,L'=t^{-M}L_0$, then we can identify $L'/L$ with the $F$-vector space $t^{-M}F^n\oplus\cdots\oplus t^{N-1}F^n$ by
$$L'/L\to V,\,x=x_{-M}t^{-M}+\cdots+x_{N-1}t^{N-1}+t^NL_0\mapsto (x_{-M},\cdots,x_{N-1})$$ with $x_i\in V$.
The decomposition $x=x_-+x_+$ is given by
$$x_-=(x_{-M},\cdots,x_{-1}),\,x_+=(x_0,\cdots,x_{N-1})$$
 in the same spirit. $L^\perp$ is identified with the space $t^{-N}F^n\oplus\cdots\oplus t^{-1}F^n$ whose coordinates are denoted $(y_{-N},\cdots,y_{-1})$. Then the definition becomes
	\begin{equation}
	\begin{split}
		&(\Phi f)_{N,-M}(x_{-M},\cdots,x_{N-1})=\int_{F^{nN}}f\left(\sum_{i=-M}^{-1}x_{i}t^i,\sum_{j=-N}^{-1}y_jt^j\right)\cdot\\&\psi(-(x_0\cdot y_{-1}+\cdots+x_{N-1}\cdot y_{-N}))dy_1\cdots dy_N
	\end{split}
	\end{equation}
	where $\cdot$ stands for the usual inner product on $F^n$. 
\par It is clear that $\Phi $ is a linear isomorphism. We will prove that $\Phi$ interwines the Weil representation on $\CS(V_-)$ and the action of $\widetilde\GL_n(F((t)))$ on $\CS(V_-)$ defined in this section in the following sense: 
\par For every $(g,\nu)\in\widetilde\GL_n(F((t)))$, we have $a(g)=\matrixx{g}{}{}{g^{-t}}\in\Sp_{2n}(F((t)))$, so we would like to compare $\pi(g)$ with the operator $T(a(g))$ given by Weil representation in Section 2.1. The definition of the projective Weil representation $T(h)$ for $h\in\Sp_{2n}(F((t)))$ depends on a choice of Haar measure on $\Im\gamma_h$, and this is done by taking natural choices of Haar measure for $h\in P$ and for $h$ is a diagonal matrix with diagonal entries powers of $t$ (see Section 3.2 or \cite{Z}). So in parallel, for every $(g,\nu)$ we can define a Haar measure on $\Im\gamma_{a(g)}$ by doing so for $g\in\GL_n(F[[t]])$ and for $g=\diag(t,1,\cdots,1)$. In the first case, $\Im\gamma_{a(g)}=\{0\}$ is a single point, and $\nu\in\mu(L_0,L_0)$ is a measure on a single point, so we just take the same measure on $\Im\gamma_{a(g)}$ as the one given by $\nu$; in the second case we have a natural isomorphism $\Im\gamma_{a(g)}\cong F$ (see \cite{Z}) and $\nu$ is a measure on a vector space canonically isomorphic to $F$, so we also assign the measure determined by $\nu$. With this choice of Haar measures, we can define an operator $T(g,\nu)$ on $\CS(V_-)$ by formula (2.6) for any $(g,\nu)\in\widetilde\GL_n(F((t)))$. We will prove below that this operator coincides with $\pi(g,\nu)$, namely the space $\CS(X,L_0)$ considered in this section is indeed a model for the Weil representation defined in Section 2.1 when restricted to the subgroup given by $\GL_n(F((t)))$. 
\begin{thm}
	The following diagram commutes for every $ (g,\nu) \in \widetilde\GL_n(F((t)))$:
\begin{center}
	\begin{tikzcd}
\CS(V_-) \arrow[r, "\Phi"] \arrow[d, "T({g,\nu})"'] & {\CS(X,L_0)} \arrow[d, "\pi({g,\nu})"] \\
\CS(V_-) \arrow[r, "\Phi"]                              & {\CS(X,L_0)}                    
\end{tikzcd}
\end{center}
\end{thm}
\begin{proof}
	Take $f\in\CS(V_-)$. It suffices to verify the commutativity for $g\in\GL_n(F[[t]])$ and for $g=\diag(t,1,\cdots,1)$. For simplicity, we fix $\nu$ so that the induced measure on $\Im\gamma_{a(g)}$ is the choice of Haar measure discussed in Section 3.2. So in the first case $\nu$ is the measure giving a point measure $1$, and in the second case $\nu$ is the usual Haar measure on the local field $F$. We will suppress $\nu$ from notations in the sequel. 
	\par We first prove the case $g=\diag(t,1,\cdots,1)$, which reduces to an $\SL_2$-computation, so we can assume $n=1$, then $g=t$ and $a(g)=\matrixx{t}{}{}{t^{-1}}$. By formula (3.14), for $f\in\CS(V_-)$, we have $$(\Phi f)_{N,-M}(x_{-M},\cdots,x_{N-1})=(\CF_{0}^{-1}\CF_{1}^{-1}\cdots\CF_{N-1}^{-1}f)(x_{-M},\cdots,x_{-1}|x_0,\cdots,x_{N-1})$$

Here and in the sequel we use freely the notations as in Section 4.2 and \cite{Z}. So \begin{align}
	&\nonumber(\pi(t)\Phi f)_{N,-M}(x_{-M},\cdots,x_{N-1})\\\nonumber=&(\Phi f)_{N+1,-M+1}(x_{-M},\cdots,x_{N-1})\\\nonumber=&(\CF_0^{-1}\CF_1^{-1}\cdots\CF_{N}^{-1}f)(x_{-M},\cdots,x_{-2}|x_{-1},\cdots,x_{N-1})\\=&(\tau_1\CF_0^{-1}\CF_1^{-1}\cdots\CF_{N}^{-1}f)(x_{-M},\cdots,x_{-1}|x_{0},\cdots,x_{N-1})
\end{align}On the other hand, by \cite{Z} Section 3 we have $$T\matrixx{t}{}{}{t^{-1}}=\CF_{-1}^{-1}\tau_1$$ so \begin{align}
	&\nonumber(\Phi(T\matrixx{t}{}{}{t^{-1}}f))_{N,-M}(x_{-M},\cdots,x_{N-1})\\\nonumber=&(\CF_{0}^{-1}\CF_{1}^{-1}\cdots\CF_{N-1}^{-1}T\matrixx{t}{}{}{t^{-1}}f)(x_{-M},\cdots,x_{-1}|x_0,\cdots,x_{N-1})\\=&(\CF_{0}^{-1}\CF_{1}^{-1}\cdots\CF_{N-1}^{-1}\CF_{-1}^{-1}\tau_1f)(x_{-M},\cdots,x_{-1}|x_0,\cdots,x_{N-1})
\end{align} The equality between (3.15) and (3.16) follows from the commutativity of the operators $\CF_i$ and the relation between $\tau_i$ and $\CF_j$ on p. 27 of \cite{Z}. 
\par Next we turn to the case $g\in\GL_n(F[[t]])$. Consider the $F$-valued symmetric bilinear form on $F^n((t))$ given by \begin{equation}
	(x(t),y(t))=\Res_{t=0}(x(t)y(t)^tdt)
\end{equation} Note that we view elements in $F^n((t))$ as row vectors. $F^n((t))$ admits a Lagrangian decomposition $$F^n((t))=F^n[t]\oplus F^n[t^{-1}]t^{-1}$$ So for every $g\in\GL_n(F((t)))$ we can write $g=\matrixx{a}{b}{c}{d}$ with respect to this decomposition, where $a:F^n[t^{-1}]^{-1}\to F^n[t^{-1}]t^{-1}$, $b:F^n[t^{-1}]t^{-1}\to F^n[t]$, $c:F^n[t]\to F^n[t^{-1}]t^{-1}$, $d:F^n[t]\to F^n[t]$. Note that for $g\in\GL_n(F[[t]])$, we have $c=0$. So we suppose $g=\matrixx{a}{b}{}{d}$.
\par Since $g\in\GL_n(F[[t]])$, for any lattice $L$ in the net (3.8) we have $Lg=L$, then by formula (5.3.12), for lattices $L\subseteq L_0\subseteq L'$ in the net (3.8) we have \begin{align}
	&\nonumber (\pi\matrixx{a}{b}{}{d}\Phi f)_{L,L'}(x_-,x_+)\\\nonumber=&(\Phi f)_{L,L'}(x_-a,x_-b+x_+d)\\=&\int_{L^\perp}\psi(-\pair{y,x_-b+x_+d})f(x_-a,y)d(yd^*)
\end{align} where we need to explain the measure $d(yd^*)$ on $L^\perp$. We suppose $L=t^NL_0$, $L'=t^{-M}L_0$, then by the definition of $\Phi f$, we can choose the measure $\mu_{L,L'}$ in the system $\Phi f$ to be $d(xd)$, where $dx$ is the usual Haar measure on $L_0/L\cong F^{nN}$, and $d(xd)$ is the Haar measure on $L_0/L$ induced by $d:F^n[t]\to F^n[t]$. After applying the operator $\pi\matrixx{a}{b}{}{d}$, this measure is changed by $d^{-1}$ by (3.12) since $Lg=L$, so becomes the usual Haar measure on $L_0/L$, which is fixed from the beginning of this proof. Then by the definition of $\Phi$, the resulting measure on $L^\perp\cong (L_0/L)^*$ is the dual Haar measure of $d(xd)$, namely $d(yd^*)$. 
\par To compute the other composition in the commutative diagram, we need a formula for $T(a(g))=T\matrixx{g}{}{}{g^{-t}}$, so we need a decomposition $a(g)=\matrixx{\alpha}{\beta}{\gamma}{\delta}$ with respect to $W((t))=V_-\oplus V_+$ as in (2.4). Since $g\in\GL_n(F[[t]])$, we clearly have $\gamma=0$. We first give a formula for $g^t$. $g^t$ is the unique $F$-linear transformation on $F^n((t))$ commuting with $t$ such that $$(xg,y)=(x,yg^t)$$ for all $x,y\in F^n((t))$, so a computation shows that \begin{equation}
	g^t=\matrixx{d^*}{b^*}{}{a^*}
\end{equation} Here the adjoints $a^*,b^*,d^*$ are the adjoints with respect to the bilinear form $(-,-)$ similar to those described in \cite{Z}, Section 2. Thus we have \begin{equation}
	g^{-t}=\matrixx{d^{-*}}{-d^{-*}b^*a^{-*}}{}{a^{-*}}
\end{equation} Here $^{-*}$ means the inverse of the adjoint, or adjoint of the inverse, they should be equal. 
\par As a result, we have $$\alpha=a+d^{-*},\,\beta=b-d^{-*}b^*a^{-*},\,\delta=d+a^{-*}.$$ So by formula (2.6), we have \begin{align}
	&\nonumber(T(a(g))f)(x_1,x_2)\\\nonumber=&\psi(\frac 1 2\pair{x_1a+x_2d^{-*},x_1b-x_2d^{-*}b^*a^{-*}})f(x_1a,x_2d^{-*})\\\nonumber=&\psi(-\frac 1 2\pair{x_1a,x_2d^{-*}b^*a^{-*}}-\frac 1 2\pair{x_1b,x_2d^{-*}})f(x_1a,x_2d^{-*})\\=&\psi(-\pair{x_1b,x_2d^{-*}})f(x_1a,x_2d^{-*})
\end{align} Thus we have \begin{align}
	&\nonumber (\Phi T(a(g))f)_{L,L'}(x_-,x_+)\\\nonumber=&\int_{L^\perp}\psi(-\pair{y,x_+})T(a(g))f(x_-,y)dy\\=&\int_{L^\perp}\psi(-\pair{y,x_+}-\pair{x_-b,yd^{-*}})f(x_-a,yd^{-*})dy
\end{align} It suffices to argue that (3.18) coincides with (3.22). To do this, we substitute $y$ with $yd^{-*}$ in (3.18), then (3.18) becomes \begin{align*}
	&\int_{L^\perp}\psi(-\pair{yd^{-*},x_-b+x_+d})f(x_-a,yd^{-*})dy\\=&\int_{L^\perp}\psi(-\pair{yd^{-*},x_-b}-\pair{y,x_+})f(x_-a,yd^{-*})dy
\end{align*} which is exactly (3.22). 
\end{proof}
In particular, this theorem indicates that there should be an action of a symplectic version of $\widetilde\GL(X,L_0)$ on the space $S(X,L_0)$. 

\subsection{Representation of Adelic group and Theta Functional. }We define  $\widetilde\GL_n(\BA\pair{t})$ to be the restricted direct product $\prod_v'\widetilde\GL_n(\BQ_v((t)))$
 with respect to the subgroups $\GL_n(\BZ_p((t)))$ for finite primes $p$. We have a short exact sequence
 \begin{equation}
	1\to\oplus_v\BR_{>0}\to\widetilde\GL_n(\BA\pair{t})\to\GL_n(\BA\pair{t})\to 1
\end{equation}
   It has a representation on the restricted tensor products of local representations $\CS_p(X,L_0)$ defined as follows.
   If $v=p$ is a finite place, we have a distinguished element $\bold1_p\in\CS(X_p,L_{0,p})$ given as follows:
   it suffices to define an element in $\CS(X_p,L_{0,p})$ on the net (3.8), so we define $f_{N,-M}\otimes\mu_{N,-M}\in\CS(X_p,L_{0,p})\otimes\mu(t^NL_{0,p},L_{0,p})$ by setting $f_{N,-M}\in\CS(t^{-M}L_{0,p}/t^NL_{0,p})\cong\CS(\BQ_p^{n(M+N)})$ to be the characteristic function of $\BZ_p^{n(M+N)}\subseteq\BQ_p^{n(M+N)}$ and $\mu_{N,-M}\in\mu(t^NL_{0,p},L_{0,p})=\mu(L_{0,p}/t^NL_{0,p})\cong\mu(\BQ_p^{nN})$ to be the Haar measure giving the set $\BZ_p^{nN}\subseteq\BQ_p^{nN}$ volume $1$.
    Let $\CS_{\BA}(X,L_0)$ be the restricted tensor product of the spaces $\CS_v(X,L_0)$ over the places $v$ of $\BQ$ with respect to the elements $\bold1_p\in\CS_p(X,L_0)$ for finite primes $p$.
    Note that $\bold1_p = \Phi ( \phi_{0, p } ) $ defined in Section 2.2.

\par Let $X_\BA=X\otimes_\BQ\BA$ and $L_{0,\BA}=L_0\otimes_\BQ\BA$. A \textbf{lattice} $L$ in $X$ is a free $\BQ\pair{t}_+$-submodule of rank $n$ in $X$ which is commeasurable with $L_0$. By a slight abuse of notations, let $\Gr_n(\BQ)$ be the set of lattices in $X$. Given an element $f\in\CS_\BA(X,L_0)$, for a pair of lattices $M\subseteq M'$ in $\Gr_n(\BQ)$, we will define a function $f_{M_\BA,M'_\BA}\in\CS(M'_\BA/M_\BA)$, where $\CS(M'_\BA/M_\BA)$ is the space of Schwartz functions on the free $\BA$-module $M'_\BA/M_\BA\cong (M'/M)_\BA$ of finite rank, as follows. 
\par We can assume $f=\otimes_v f_v$ is a pure tensor, where each local component is given by $f_v=(f_{v;L,L'}\otimes\mu_{v;L,L'})$ for $L\subseteq L'$ lattices in $X_v$. First we assume that $M\subseteq L_0$, then each $\mu_{v;M_v,L_{0,v}}$ is a Haar measure on $L_{0,v}/M_v\cong (L_0/M)_v$, so the product $\prod_{v}\mu_{v;M_v,L_{0,v}}$ is a Haar measure on $(L_0/M)_\BA$. Recall that for each finite-dimensional $\BQ$-vector space $K$ there is a canonical choice of Haar measures on $K_\BA$, determined by the condition that the quotient $K_\BA/K_\BQ$ has volume $1$. We say a choice of the measures $\mu_{v;M_v,L_{0,v}}$ for the various $v$ is \textbf{good} if the induced product measure on $(L_0/M)_\BA$ is the canonical measure. If $M\notin L_0$, there is an analogous notion of a good family of elements $\mu_{v;M_v,L_{0,v}}\in\mu(M_v,L_{0,v})$. Now we assume $\mu_{v;M_v,L_{0,v}}$ is a good family, then we define $f_{M_\BA,M'_\BA}$ to be the product $\prod_v f_{v;M_v,M_v'}$. This is well-defined, and the resulting family $(f_{M_\BA,M'_\BA})_{M,M\in\Gr_n(\BQ),\,M\subseteq M'}$ satisfies a compatibility relation analogous to (3.7). 
\par  Let
  \[ \CS_\BA'(X,L_0) =   \Phi ( {\cal S}' ( V_{ - , \BA} ) )     \]
 We	define the \textbf{theta functional} $\theta : \CS_\BA'(X,L_0) \to {\BC} $ by 
 \[  \theta ( \phi ) = \theta ( \Phi^{-1}  \phi )\]
 where  $\theta $ in the right hand side is given in Section 2.2.
\par To find $\theta ( \phi  )$ for $\phi \in \CS_\BA'(X,L_0) $ without using $\Phi^{-1} $,  we can proceed as follows: for $\phi=(\phi_{L,L'}\otimes\mu_{L,L'})_{L\subseteq L}\in\CS_\BA'(W,L_0)$, for $M,N\in\BZ_{\geq0}$, let $f_{N,-M}$ be the function $f_{t^NL_{0,\BA},t^{-M}L_{0,\BA}}\in\CS((t^NL_0/t^{-M}L_0)_\BA)$ defined above.  There is a partial theta functional $$\theta_{N,-M}:\CS((t^{-M}L_0/t^NL_0)_\BA)\to\BC,\,\theta_{N,-M}(f) =\sum_{r\in (t^NL_0/t^{-M}L_0)_\BQ}f(r)$$
 Then the limit
  $$\lim_{M,N\to\infty}\theta_{N,-M}(f_{N,-M})$$
   is $\theta ( \phi ) $. To see this, we start from $f\in \CS(V_{-,\BA})$, so that \begin{align*}
   	&(\Phi f)_{N,-M}(x_{-M},\cdots,x_{N-1})=\int_{\BA^{nN}}f(x_{-M},\cdots,x_{-1}|y_0,\cdots,y_{N-1})\\&\psi(x_0y_0+\cdots+x_{N-1}y_{N-1})dy_0\cdots dy_{N-1}
   \end{align*} Then \begin{align*}
   	&\theta_{N,-M}((\Phi f)_{N,-M})\\=&\sum_{r_{-M},\cdots,r_{N-1}\in\BQ^n}\int_{\BA^{nN}}f(r_{-M},\cdots,r_{-1}|y_0,\cdots,y_{N-1})\psi(r_0y_0+\cdots+r_{N-1}y_{N-1})dy\\=&\sum_{r_{-M},\cdots,r_{N-1}\in\BQ^n}f(r_{-M},\cdots,r_{-1}|r_0,\cdots,r_{N-1})&
   \end{align*} by Poisson summation formula. Taking limit as $M,N\to\infty$, it is clear that the limit is $$\theta(f)=\sum_{r\in\BQ^{2n}[t^{-1}]t^{-1}}f(r)$$ hence the result. 
\par In particular, the functional $\theta $ is $\GL_n(\BQ\pair{t})\ltimes\sigma(\BQ^*)$-invariant.
\section{Theta Lifting}
\subsection{The Dual Pair}Let $X=M_n(\BQ\pair{t}),\,L_0=M_n(\BQ\pair{t}_+)$. There is a right action of the group $G(\BA)=\GL_n(\BA\pair{t})$ on $X_\BA$ by $x(t)\cdot g=g^tx(t)$ (here $g^t$ is the transpose of $g$) which gives an injection $G(\BA)\hookrightarrow\GL_{n^2}(\BA\pair{t})$. In section 1.5 we constructed a central extension $\widetilde\GL_{n^2}(\BA\pair{t})$ of $\GL_{n^2}(\BA\pair{t})$ and a representation of this central extension on $\CS_\BA(X,L_0)$. It turns out that the pullback of the central extension $\widetilde\GL_{n^2}(\BA\pair{t})$ along the above injection is isomorphic to the central extension $\widetilde\GL_n(\BA\pair{t})$, so we can view $\widetilde\GL_n(\BA\pair{t})$ as a subgroup of $\widetilde\GL_{n^2}(\BA\pair{t})$, denoted $\widetilde G(\BA)$. Note that the central extension $\widetilde G(\BA)$ of $G(\BA)$ splits over the subgroup $G(\BQ):=\GL_n(\BQ\pair{t})$, so we can view $G(\BQ)$ as a subgroup of $\widetilde G(\BA)$.
\par We also consider the right action of $\GL_n(\BA\pair{t}_+)$ on $X_\BA$ by $x\cdot h=xh$ (matrix multiplication), which yields an injection $\GL_n(\BA\pair{t}_+)\hookrightarrow\GL_{n^2}(\BA\pair{t})$. It turns out that the central extension $\widetilde\GL_{n^2}(\BA\pair{t})$ splits over the image of this injection, so we can view $\GL_n(\BA\pair{t}_+)$ as a subgroup of $\widetilde\GL_{n^2}(\BA\pair{t})$, denoted $H(\BA)$. Let $H(\BA)^1=\GL_n(\BA\pair{t}_+)^1$ be the kernel of the composition map  $$\GL_n(\BA\pair{t}_+)\xrightarrow{\text{evaluation at }t=0}\GL_n(\BA)\xrightarrow{\det}\BA^*\xrightarrow{|\cdot|_\BA}\BR_{>0}$$
\par The two subgroups above will serve as an analogue of a ``dual pair'' in the classical theory of theta liftings. However, they are not commutative subgroups in $\widetilde\GL_{n^2}(\BA\pair{t})$ as in the classical case. The commutativity relation between them is that the subgroup $G(\BQ)$ of $\widetilde G(\BA)$ is commutative with $H(\BA)^1$ in $\widetilde\GL_{n^2}(\BA\pair{t})$, which ensures that the theta lifting of an automorphic function is still automorphic (here the word ``automorphic'' means to be left-invariant under the corresponding subgroup over $\BQ$.)
\subsection{Theta Liftings}Let $\GL_n(\BA)^1$ be the subgroup of $\GL_n(\BA)$ consisting of elements of determinant $1$. Take a classical cuspidal automorphic form $\varphi$ on $\GL_n(\BA)^1$. We also view $\varphi$ as a function on $H(\BA)^1=\GL_n(\BA\pair{t}_+)^1$ via the pullback along $$\GL_n(\BA\pair{t}_+)^1\xrightarrow{\text{evaluation at }t=0}\GL_n(\BA)^1$$ For a Schwartz function $f\in\CS'_\BA(M_n(\BQ\pair{t}),M_n(\BQ\pair{t}_+))$ the \textbf{theta lifting} of $\varphi$ is defined to be the function $\Theta_f(\varphi,g)$ on $g\in\widetilde G(\BA)=\widetilde\GL_n(\BA\pair{t})\ltimes\BA^\times$ given by \begin{equation}
	\Theta_f(\varphi,g)=\int_{\GL_n(\BQ\pair{t}_+)\backslash\GL_n(\BA\pair{t}_+)^1}\theta(\pi(h)\pi(g)f)\varphi(h)dh
\end{equation} where there exists a ``Haar measure'' on $\GL_n(\BQ\pair{t}_+)\backslash\GL_n(\BA\pair{t}_+)^1$ with total volume $1$. The integration is absolutely convergent by \cite{LZ2} Proposition 3.2. Also by commutativity of $G(\BQ)$ with $H(\BA)^1$ and the $G(\BQ)$-invariance of the theta functional, we see immediately that the resulting function $\Theta_f(\varphi,-)$ is left $G(\BQ)$-invariant, namely it is an automorphic function. 
\section{The case $n=1$.}In this section we compute the theta lifting of the constant function on $\GL_1(\BA\pair{t}_+)$, namely we are going to compute $$\Theta_f(\bold1,g)=\int_{\GL_1(\BQ\pair{t}_+)\backslash\GL_1(\BA\pair{t}_+)^1}\theta(\pi(h)\pi(g)f)dh$$ Let $F=\pi(g)f$, then by the definition of the theta functional, we have \begin{align}
	 &\Theta_f(\bold1,g)=\int_{\GL_1(\BQ\pair{t}_+)\backslash\GL_1(\BA\pair{t}_+)^1}\theta(\pi(h)F)dh\\\nonumber&=\int_{\GL_1(\BQ\pair{t}_+)\backslash\GL_1(\BA\pair{t}_+)^1}\lim_{M,N\to\infty}\sum_{\gamma\in t^{-M}\BQ\pair{t}_+/t^N\BQ\pair{t}_+}(\pi(h)F)_{N,-M}(\gamma)dh
\end{align}
\par We can exchange the order of taking limit and taking integration. This can be argued as follows: each of the $\theta_{N,-M}$ in the definition of the theta functional, when transported to the model $\CS(V_-)$ of Weil representation under $\Phi^{-1}$, is a sub-summation of the summation giving the theta functional on $\CS(V_-)$, so we can exchange the order by using the Fubini theorem and the fact that the theta integral (4.1) is absolutely convergent. 
\par So we consider \begin{equation}
	S_{N,-M}:=\int_{\GL_1(\BQ\pair{t}_+)\backslash\GL_1(\BA\pair{t}_+)^1}\sum_{\gamma\in t^{-M}\BQ\pair{t}_+/t^N\BQ\pair{t}_+}(\pi(h)F)_{N,-M}(\gamma)dh
\end{equation} Then $\Theta_f(\bold1,g)$ is the limit $\lim_{M,N\to\infty}S_{N,-M}$. We use the ``unfolding method'' to proceed the computation, namely we consider the $\GL_1(\BQ\pair{t}_+)$-action on the set $t^{-M}\BQ\pair{t}_+/t^N\BQ\pair{t}_+$. Clearly the orbits are classified the degree of an element, namely a set of orbit representatives are given as $t^r+t^N\BQ\pair{t}_+$ for $r=-M,-M+1,\cdots,N-1$. The stabilizer of $t^r+t^N\BQ\pair{t}_+$ is $$\GL_1(\BQ\pair{t}_+)_r=\{1+b_{N-r}t^{N-r}+b_{N-r+1}t^{N-r+1}+\cdots\in\BQ\pair{t}\}$$ So we have
\begin{align}
	 S_{N,-M}&=\sum_{r=-M}^{N-1}\int_{\GL_1(\BQ\pair{t}_+)_r\backslash\GL_1(\BA\pair{t}_+)^1}(\pi(h)F)_{N,-M}(t^r+t^N\BA\pair{t}_+)dh\\\nonumber&=\sum_{r=-M}^{N-1}\vol(\GL_1(\BQ\pair{t}_+)_r\backslash\GL_1(\BA\pair{t}_+)_r)\cdot\\\nonumber&\int_{\GL_1(\BA\pair{t}_+)_r\backslash\GL_1(\BA\pair{t}_+)^1}(\pi(h)F)_{N,-M}(t^r+t^N\BA\pair{t}_+)dh
\end{align}
where $\GL_1(\BA\pair{t}_+)_r=\{1+b_{N-r}t^{N-r}+b_{N-r+1}t^{N-r+1}+\cdots\in\BA\pair{t}\}$. Clearly $\vol(\GL_1(\BQ\pair{t}_+)_r\backslash\GL_1(\BA\pair{t}_+)_r)=1$, and since $h\in\GL_1(\BA\pair{t}_+)^1$ which preserves each of the local lattices $t^k\BQ_v[[t]]$ and maps a good family to another good family, we have $$(\pi(h)F)_{N,-M}(t^r+t^N\BA\pair{t}_+)=F_{N,-M}(t^rh+t^N\BA\pair{t}_+)$$ Thus $$S_{N,-M}=\sum_{r=-M}^{N-1}\int_{\GL_1(\BA\pair{t}_+)_r\backslash\GL_1(\BA\pair{t}_+)^1}F_{N,-M}(t^rh+t^N\BA\pair{t}_+)dh$$ Each term in this summation is actually the integration of $F_{N,-M}$ over the orbit of $t^r$ in $t^{-M}\BA\pair{t}_+/t^N\BA\pair{t}_+$ under $\GL_1(\BA\pair{t})^1$ with compatible measures. Clearly this orbit can be identified with the set $$\{a_0t^r+a_1t^{r+1}+\cdots+a_{N-r-1}t^{N-1}:a_0\in\BA^1,\,a_1,\cdots,a_{N-r-1}\in\BA\}$$ where $\BA^1$ is the subset of $\BA^*$ of elements of adele norm $1$, which has a standard measure. The standard measure on $\GL_1(\BA\pair{t}_+)_r\backslash\GL_1(\BA\pair{t}_+)^1$ transfers to the standard measure on the set $\BA^1\times\BA^{\oplus(N-r-1)}$. So we have
\begin{align}
	&S_{N,-M}\\\nonumber& =\sum_{r=-M}^{N-1}\int_{\BA^1}\int_{\BA^{\oplus(N-r-1)}}F_{N,-M}(a_0t^r+\cdots+a_{N-r-1}t^{N-1}+t^N\BA\pair{t}_+)d^\times a_0da_1\cdots da_{N-r-1}\\\nonumber&=\sum_{r=-M}^N\int_{\BA^1}\int_{\BA^{\oplus(N-r-1)}}F_{N,r}(a_0t^r+\cdots+a_{N-r-1}t^{N-1}+t^N\BA\pair{t}_+)d^\times a_0da_1\cdots da_{N-r-1}
\end{align} Because of the $i^*$-compatibility in (3.7) We set $$I(r,N)=\int_{\BA^1}\int_{\BA^{\oplus(N-r-1)}}F_{N,r}(a_0t^r+\cdots+a_{N-r-1}t^{N-1}+t^N\BA\pair{t}_+)d^\times a_0da_1\cdots da_{N-r-1}$$
\begin{lem}
	$I(r,N)=I(r,N+1)$
\end{lem}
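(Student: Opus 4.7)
The identity $I(r,N)=I(r,N+1)$ is a direct manifestation of the $p_*$-compatibility condition (3.7) for the family $F=\pi(g)f$, coupled with Fubini's theorem. The guiding idea is that the extra integration over $a_{N-r}\in\BA$ that appears in $I(r,N+1)$ but is absent in $I(r,N)$ is precisely the push-forward integration that produces $F_{N,r}$ from $F_{N+1,r}$.

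Concretely, I apply (3.7) to the chain of lattices $t^{N+1}L_0\subseteq t^NL_0\subseteq t^rL_0$, which (after passing to the adelic setting of Section 3.5) yields
\begin{equation*}
F_{N,r}\bigl(a_0t^r+\cdots+a_{N-r-1}t^{N-1}+t^N\BA\pair{t}_+\bigr)=\int_{\BA}F_{N+1,r}\bigl(a_0t^r+\cdots+a_{N-r}t^N+t^{N+1}\BA\pair{t}_+\bigr)\,da_{N-r},
\end{equation*}
once the measure on the one-dimensional adelic quotient $(t^NL_0/t^{N+1}L_0)_\BA\cong\BA$ has been pinned down. The good-family condition of Section 3.5 forces that measure to be the canonical Tamagawa measure, which under the parametrization $y=a_{N-r}t^N$ is precisely $da_{N-r}$.

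Substituting this identity into the defining formula for $I(r,N)$ and interchanging the order of integration via Fubini merges the new $\BA$-integral with the preexisting integrations over $\BA^1\times\BA^{\oplus(N-r-1)}$, producing exactly $I(r,N+1)$. Fubini is applicable because the theta integral of $\pi(g)f$ is absolutely convergent on $\CS'_\BA(X,L_0)$ (cited in Section 4.2 as Proposition 3.2 of \cite{LZ2}), so all iterated integrals involved are finite. I do not anticipate a serious obstacle; the only step that demands genuine care is the adelic measure bookkeeping required to identify the $p_*$-integration measure with $da_{N-r}$, and this is taken care of once one unpacks the good-family definition.
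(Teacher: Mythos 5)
Your proposal is correct and follows essentially the same route as the paper's own proof: both invoke the $p_*$-compatibility in (3.7) for the lattice chain $t^{N+1}L_0\subseteq t^N L_0\subseteq t^r L_0$ to write $F_{N,r}(\cdot + t^N\BA\pair{t}_+)$ as an integral over $\BA$ of $F_{N+1,r}(\cdot + a_{N-r}t^N + t^{N+1}\BA\pair{t}_+)$, then merge the integrals via Fubini. Your write-up is in fact somewhat more explicit than the paper's, in that you spell out why the good-family condition identifies the $p_*$-integration measure with $da_{N-r}$ and why Fubini is legal, both of which the paper takes for granted.
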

\begin{proof} By the $p_*$-compatibility in (3.7) we have
	\begin{align*}
		I(r,N)&=\int_{\BA^1}\int_{\BA^{\oplus(N-r-1)}}F_{r,N}(a_0t^r+\cdots+a_{N-r-1}t^{N-1}+t^N\BA\pair{t}_+)d^\times a_0da_1\cdots da_{N-r-1}\\&=\int_{\BA^1}\int_{\BA^{\oplus(N-r-1)}}\int_\BA F_{r,N+1}(a_0t^r+\cdots+a_{N-r-1}t^{N-1}+a_{N-r}t^N+t^{N+1}\BA\pair{t}_+)\\&d^\times a_0da_1\cdots da_{N-r}\\&=I(r,N+1)
	\end{align*}
\end{proof}
\begin{thm}
	Define $I_f(x)=\int_{\BA^1}(\pi(x)f)_{1,0}(a_0+t\BA\pair{t}_+)d^\times a_0$ for $x\in\widetilde\GL_1(\BA\pair{t})$. Then we have $$\Theta_f(\bold1,g)=\sum_{r\in\GL_1(\BQ\pair{t})_+\backslash\GL_1(\BQ\pair{t})}I_f(r g)$$
\end{thm}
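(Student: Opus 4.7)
The plan is to combine three pieces: the stabilization of $I(r,N)$ in $N$, an identification of that stable value with $I_f(t^r g)$ via the representation formula for $\pi(t^r)$, and the explicit description of the coset space $\GL_1(\BQ\pair{t}_+)\backslash\GL_1(\BQ\pair{t})$.

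First, from the derivation preceding the statement,
$$\Theta_f(\bold 1,g) = \lim_{M,N\to\infty}\sum_{r=-M}^{N-1} I(r,N).$$
Lemma 5.0.1 shows $I(r,N)=I(r,N+1)$, so $I(r,N)$ stabilizes to a value $I(r)$ depending only on $r\in\BZ$ (for every $N\geq r+1$). Together with the absolute convergence of the theta functional (Theorem~\ref{conv}), which was also used in the section to exchange limit and integral, this lets me rewrite the iterated limit as the unordered sum $\sum_{r\in\BZ} I(r)$.

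Second, I would evaluate $I(r)$ at the minimal admissible choice $N=r+1$. The integration over $\BA^{\oplus(N-r-1)}$ is then vacuous, leaving
$$I(r) = \int_{\BA^1} F_{r+1,r}\bigl(a_0 t^r + t^{r+1}\BA\pair{t}_+\bigr)\,d^\times a_0.$$
Unwinding the representation formula for $\pi(t^r)$ on the compatible family indexed by (3.8), the element $t^r\in\GL_1(\BQ\pair{t})$ shifts the lattice indices $(1,0)\mapsto(r+1,r)$ and multiplies the argument by $t^r$, yielding $(\pi(t^r)F)_{1,0}(a_0+t\BA\pair{t}_+) = F_{r+1,r}(a_0 t^r + t^{r+1}\BA\pair{t}_+)$. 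Since $F=\pi(g)f$, we have $\pi(t^r)F=\pi(t^r g)f$, and the right-hand side is precisely the integrand in the definition of $I_f(t^r g)$, so $I(r)=I_f(t^r g)$.

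Third, the $t$-adic valuation identifies $\GL_1(\BQ\pair{t})/\GL_1(\BQ\pair{t}_+)\cong\BZ$ with representatives $\{t^r:r\in\BZ\}$, because any nonzero element of $\BQ\pair{t}\subseteq\BQ((t))$ factors as $t^r u$ with $u\in\BQ\pair{t}_+^\times$. A brief check, using the product formula $|u_0|_\BA=1$ for $u_0\in\BQ^\times$, confirms that $I_f$ is left $\GL_1(\BQ\pair{t}_+)$-invariant, so $\sum_{r\in\BZ}I_f(t^r g)$ equals the coset sum in the theorem, completing the argument.

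The main obstacle lies in the second step: tracking the $\BR_{>0}$-torsors of Haar measures on the lattice quotients to verify that the identification $(\pi(t^r)F)_{1,0}\leftrightarrow F_{r+1,r}$ produces no stray normalization factor, and a parallel bookkeeping for the left-invariance check. The remaining work --- the stabilization via Lemma 5.0.1 and the coset decomposition --- is essentially formal.
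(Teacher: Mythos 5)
Your proposal is correct and follows essentially the same route as the paper: stabilize $I(r,N)$ via Lemma 5.0.1, identify the stable value $I(r,r+1)=\int_{\BA^1}F_{r+1,r}(a_0t^r+t^{r+1}\BA\pair{t}_+)d^\times a_0$ with $I_f(t^rg)$ through the action of $\pi(t^r)$, and sum over the representatives $\{t^r:r\in\BZ\}$ of $\GL_1(\BQ\pair{t}_+)\backslash\GL_1(\BQ\pair{t})$. The extra bookkeeping you flag (measure torsors, left-invariance of $I_f$) is exactly what the paper leaves implicit, and it works out as you expect.
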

\begin{proof}
	Define $I_r=I(r,r+1)$, then $I_r=I(r,N)$ for all $N>r$, thus $$\Theta_f(\bold1,g)=\lim_{M,N\to\infty}S_{-M,N}=\sum_{r\in\BZ}I_r$$ Also we have $$I_r=\int_{\BA^1}F_{r+1,r}(a_0t^r+t^{r+1}\BA\pair{t}_+)d^\times a_0=\int_{\BA^1}(\pi(t^r)f)_{1,0}(a_0+t\BA\pair{t}_+)d^\times a_0$$ The result then follows from the fact that $\{t^r:r\in\BZ\}$ is a set of representatives of $\GL_1(\BQ\pair{t})_+\backslash\GL_1(\BQ\pair{t})$.
\end{proof}
\section{The case $n>1$}
In this section we compute the theta lifting of the function on $\GL_n(\BA\pair{t}_+)$ obtained from a classical cuspidal automorphic form $\varphi$ on $\GL_n(\BA)^1$ as explained in section 2.2, namely we are going to compute $$\Theta_f(\varphi,g)=\int_{\GL_n(\BQ\pair{t}_+)\backslash\GL_n(\BA\pair{t}_+)^1}\theta(\pi(h)\pi(g)f)\varphi(h)dh$$
The general strategy is similar to the one in the previous section, namely we first exchange limit and integration and consider summations analogous to $S_{N,-M}$, for which we use unfolding method to proceed. However, in the case $n>1$ the orbits are much more complicated. What saves the day is that the orbital integrals corresponding to a large part of the orbits vanishes because of cuspidality of $\varphi$. The rest of the orbits will be assembled to match the set $\Gr_n(\BQ)$ just as in the case $n=1$. Such a summation can be interpreted as an Eisenstein series.
\subsection{Step 1: Unfolding} We still let $F=\pi(g)f$, then by the definition of the theta functional, we have \begin{align}
	 &\Theta_f(\varphi,g)=\int_{\GL_n(\BQ\pair{t}_+)\backslash\GL_n(\BA\pair{t}_+)^1}\theta(\pi(h)F)\varphi(h)dh\\\nonumber&=\int_{\GL_n(\BQ\pair{t}_+)\backslash\GL_n(\BA\pair{t}_+)^1}\lim_{M,N\to\infty}\sum_{\gamma\in t^{-M}M_n(\BQ\pair{t}_+)/t^NM_n(\BQ\pair{t}_+)}(\pi(h)F)_{N,-M}(\gamma)\varphi(h)dh
\end{align}
We can still exchange the order of taking limit and integration , so $\Theta_f(\varphi,g)$ is equal to the limit as $M,N\to\infty$ of
\begin{equation}
	S_{N,-M}:=\int_{\GL_n(\BQ\pair{t}_+)\backslash\GL_n(\BA\pair{t}_+)^1}\sum_{r\in t^{-M}M_n(\BQ\pair{t}_+)/t^NM_n(\BQ\pair{t}_+)}F_{N,-M}(rh)\varphi(h)dh
\end{equation}
Let $U(\BA)$ be the kernel of the homomorphism $\GL_n(\BA\pair{t}_+)\to\GL_n(\BA)$ given by evaluation at $t=0$, let $U(\BQ)=U(\BA)\cap\GL_n(\BQ\pair{t}_+)$, then we have a semi-direct product $\GL_n(\BA\pair{t}_+)^1=U(\BA)\GL(\BA)^1$, so
$$S_{N,-M}=\int_{\GL_n(\BQ)\backslash\GL_n(\BA)^1}\int_{U(\BQ)\backslash U(\BA)}\sum_{r\in t^{-M}M_n(\BQ\pair{t}_+)/t^NM_n(\BQ\pair{t}_+)}F_{N,-M}(rha)\varphi(a)dhda$$
\par Now we use the unfolding method, consider the right $U(\BQ)$-action on the set $t^{-M}M_n(\BQ\pair{t}_+)/t^NM_n(\BQ\pair{t}_+$, which we identify with $M_n(\BQ\pair{t})_{N,-M}=t^{-M}M_n(\BQ)\oplus t^{-M+1}M_n(\BQ)\oplus\cdots\oplus t^{N-1}M_n(\BQ)$. For $r\in M_n(\BQ\pair{t})_{N,-M}/U(\BQ)$, let $U(\BQ)_{r,N}$ be the stabilizers of $r$ in $t^{-M}M_n(\BQ\pair{t}_+)/t^NM_n(\BQ\pair{t}_+)$, let $U(\BA)_{r,N}$ be the stabilizers of $r$ in $t^{-M}M_n(\BA\pair{t}_+)/t^NM_n(\BA\pair{t}_+)$ Then we have \begin{align}
	&\int_{U(\BQ)\backslash U(\BA)}\sum_{r\in t^{-M}M_n(\BQ\pair{t}_+)/t^NM_n(\BQ\pair{t}_+)}F_{N,-M}(rha)dh\\\nonumber&=\sum_{r\in M_n(\BQ\pair{t})_{N,-M}/U(\BQ)}\int_{U(\BQ)_{r,N}\backslash U(\BA)}F_{N,-M}(rha+t^NM_n(\BA\pair{t}_+))dh\\\nonumber&=\sum_{r\in M_n(\BQ\pair{t})_{N,-M}/U(\BQ)}\vol(U(\BQ)_{r,N}\backslash U(\BA)_{r,N})\int_{U(\BA)_{r,N}\backslash U(\BA)}F_{N,-M}(rha+t^NM_n(\BA\pair{t}_+))dh
\end{align}
We still have $\vol(U(\BQ)_{r,N}\backslash U(\BA)_{r,N})=1$. Also we  think of the integration $$\int_{U(\BA)_{r,N}\backslash U(\BA)}F_{N,-M}(rha+t^NM_n(\BA\pair{t}_+))dh$$ as doing integration on the orbit of $r$ under $U(\BA)$ with compatible measures. Note that we have a measure-preserving set bijection $U(\BA)=1+tM_n(\BA\pair{t}_+)$, so the orbit of $r$ under $U(\BA)$ is identified with $r+rtM_n(\BA\pair{t}_+)$ with compatible measures. Thus this integration is equal to $$\int_{\frac{rtM_n(\BA\pair{t}_+)+t^NM_n(\BA\pair{t}_+)}{t^NM_n(\BA\pair{t}_+)}}F_{N,-M}((r+\alpha)a)d\alpha=$$ $$\int_{\frac{rtM_n(\BA\pair{t}_+)+t^NM_n(\BA\pair{t}_+)}{t^NM_n(\BA\pair{t}_+)}}F_{t^NM_n(\BA\pair{t}_+),rM_n(\BA\pair{t}_+)+t^NM_n(\BA\pair{t}_+)}(ra+\alpha)d\alpha$$ because of the $i^*$-compatibility in (3.7). To summarize, so far we have \begin{equation}
	\begin{split}
		&S_{N,-M}=\int_{\GL_n(\BQ\pair{t}_+)\backslash\GL_n(\BA\pair{t}_+)^1}\sum_{r\in r\in M_n(\BQ\pair{t})_{N,-M}/U(\BQ)}\int_{\frac{rtM_n(\BA\pair{t}_+)+t^NM_n(\BA\pair{t}_+)}{t^NM_n(\BA\pair{t}_+)}}\\&F_{t^NM_n(\BA\pair{t}_+),rM_n(\BA\pair{t}_+)+t^NM_n(\BA\pair{t}_+)}(ra+\alpha)\varphi(a)d\alpha da
	\end{split}
\end{equation}
We need to unfold the outer integration again, consider the right $\GL_n(\BQ)$-action on $r\in M_n(\BQ\pair{t})_{N,-M}/U(\BQ)$. For any $r\in R_{N,-M}$ let $\GL_{n,r}(\BQ)$ be the stabilizer of $r$, which is always a unipotent radical (it may equal to $\{1_n\}$, where $1_n$ is the identity $n\times n$ matrix). Thus we have \begin{equation}
		S_{N,-M}=\sum_{r\in r\in M_n(\BQ\pair{t})_{N,-M}/\GL_n(\BQ\pair{t}_+)}I(r,N)
\end{equation} where 
\begin{equation}
	\begin{split}
		I(r,N)=&\int_{\frac{rtM_n(\BA\pair{t}_+)+t^NM_n(\BA\pair{t}_+)}{t^NM_n(\BA\pair{t}_+)}}\int_{\GL_{n,r}(\BQ\pair{t}_+)\backslash\GL_{n,r}(\BA\pair{t}_+)}\int_{\GL_{n,r}(\BA\pair{t}_+)\backslash\GL_n(\BA\pair{t}_+)^1}\\&F_{t^NM_n(\BA\pair{t}_+),rM_n(\BA\pair{t}_+)+t^NM_n(\BA\pair{t}_+)}(ra+\alpha)\varphi(ua)d\alpha du da
	\end{split}
\end{equation}
\subsection{Step 2: Analysis of Orbits}Let $R_{N,-M}$ be a set of representatives of $r\in M_n(\BQ\pair{t})_{N,-M}/\GL_n(\BQ\pair{t}_+)$. The definition of $I(r,N)$ is actually valid for $r\in M_n(\pair{t})_{N,-M}$ and is $\GL_n(\BQ\pair{t}_+)$-invariant. As we already mentioned in the beginning of this section, a large number of orbital integrals $I(r,N)$ actually vanishes because of cuspidality of $\varphi$. In the sequel we will find a set $C_N\subseteq M_n(\BQ\pair{t})$ such that $I(r,N)=0$ as long as $r\notin C_N$, so the summation for $S_{N,-M}$ can actually be taken on $C_N\cap R_{N,-M}$ (since the set $C_N$ is bi-$\GL_n(\BQ\pair{t}_+$-invariant by Lemma 6.2.3 below, the choice of representatives in $R_{N,-M}$ does not change the result). It turns out that for $r\in C_N\cap R_{N,-M}$ we have $I(r,N)=I(r,N+1)$ which is analogous to Lemma 5.0.1, and then after taking limit, the summation can be interpreted to be on the affine Grassmannian $\Gr_n(\BQ)$, so we can interpret the theta lifting as an Eisenstein series.
\begin{defn}
	Define $C_N\subseteq M_n(\BQ\pair{t})$ to be the set of $r\in M_n(\BQ\pair{t})$ satisfying the following condition: there does not exist a unipotent radical $U'\subseteq\GL_n(\BA)^1$ such that $$ru-r\in rtM_n(\BA\pair{t}_+)+t^NM_n(\BA\pair{t}_+),\,u\in U'(\BA)$$
\end{defn}
\begin{lem}
	Let $r$ be an element in $M_n(\BQ\pair{t})_{N,-M}$. If $r\notin C_N$, then $I(r,N)=0$.
\end{lem}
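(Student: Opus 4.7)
The strategy is to exploit the cuspidality of $\varphi$. According to the dichotomy asserted in Step~1, the stabilizer $\GL_{n,r}$ is either trivial or the unipotent radical of a proper $\BQ$-parabolic of $\GL_n$; I will treat the two cases separately, producing in each a vanishing inner integration.

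If $\GL_{n,r}$ is a nontrivial proper unipotent radical, I would swap the order of integration to perform the $u$-integration first. Since $F(ra+\alpha)$ does not depend on $u$, this gives
\[
I(r,N)=\int F(ra+\alpha)\Bigl(\int_{\GL_{n,r}(\BQ\pair{t}_+)\backslash\GL_{n,r}(\BA\pair{t}_+)}\varphi(ua)\,du\Bigr)d\alpha\,da.
\]
Using that $\varphi$ factors through evaluation at $t=0$ and the corresponding splitting of $\GL_{n,r}(\BA\pair{t}_+)\to\GL_{n,r}(\BA)$, the inner integral reduces, up to a factor coming from the compact quotient of the kernel of evaluation, to the classical $\GL_{n,r}$-constant term $\int_{\GL_{n,r}(\BQ)\backslash\GL_{n,r}(\BA)}\varphi_0(u_0a_0)\,du_0$ of the cusp form $\varphi_0$ pulled back by $\varphi$, which vanishes by cuspidality.

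If $\GL_{n,r}=\{1_n\}$, the $u$-integration collapses and I would use the witness $U'\subseteq\GL_n$ provided by $r\notin C_N$. For each fixed $u'\in U'(\BA)$, the substitution $(a,\alpha)\mapsto(u'a,\ \alpha-(ru'-r)a)$ on the inner integration variables has unit Jacobian (by unimodularity of $\GL_n(\BA\pair{t}_+)^1$ and translation invariance of $d\alpha$), and keeps $\alpha$ inside its integration domain because $rtM_n(\BA\pair{t}_+)+t^NM_n(\BA\pair{t}_+)$ is stable under right multiplication by $\GL_n(\BA\pair{t}_+)$. A direct algebraic check (using $r u' = r+(ru'-r)$) shows that this substitution transforms $F(ra+\alpha)\varphi(a)$ into $F(ra+\alpha)\varphi(u'a)$. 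Averaging the resulting identity over $u'\in U'(\BQ)\backslash U'(\BA)$ (Tamagawa volume one) and swapping integrations yields
\[
I(r,N)=\int F(ra+\alpha)\Bigl(\int_{U'(\BQ)\backslash U'(\BA)}\varphi(u'a)\,du'\Bigr)d\alpha\,da,
\]
and the inner integral is $\int_{U'(\BQ)\backslash U'(\BA)}\varphi_0(u'a_0)\,du'=0$ directly by cuspidality of $\varphi_0$.

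I expect the principal technical points to be (i) justifying the measure-preservation claims in the second case, in particular that left multiplication by $u'\in U'(\BA)$ preserves the chosen quotient Haar measure on $\GL_n(\BA\pair{t}_+)^1$, and (ii) verifying the compatibility of the semidirect splitting of $\GL_{n,r}(\BA\pair{t}_+)$ with the Tamagawa normalizations used in the first case. Both reduce to unimodularity and should present no conceptual difficulty; the key conceptual input is simply the bifurcation into cases together with cuspidality.
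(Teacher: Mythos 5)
Your proof is correct and follows essentially the same strategy as the paper's: split on whether $\GL_{n,r}$ is a nontrivial unipotent radical or trivial, use cuspidality of $\varphi$ to kill the $u$-integration in the first case, and in the second case use the witness $U'$ from $r\notin C_N$ to produce a change of variables that turns the $a$-integral into a $U'$-constant term, which vanishes by cuspidality. Your averaging over $U'(\BQ)\backslash U'(\BA)$ is just a slight repackaging of the paper's step of establishing $U'(\BQ)$-invariance of the inner $\alpha$-integral and then unfolding; the substance is identical.
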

\begin{proof}
	If $r\notin C_N$, then there exists a unipotent radical $U'\subseteq\GL_2(\BA)^1$ such that $$ru-r\in rtM_n(\BA\pair{t}_+)+t^NM_n(\BA\pair{t}_+),\,u\in U'(\BA)$$ Recall that \begin{equation*}
	\begin{split}
		I(r,N)=&\int_{\frac{rtM_n(\BA\pair{t}_+)+t^NM_n(\BA\pair{t}_+)}{t^NM_n(\BA\pair{t}_+)}}\int_{\GL_{n,r}(\BQ\pair{t}_+)\backslash\GL_{n,r}(\BA\pair{t}_+)}\int_{\GL_{n,r}(\BA\pair{t}_+)\backslash\GL_n(\BA\pair{t}_+)^1}\\&F_{t^NM_n(\BA\pair{t}_+),rM_n(\BA\pair{t}_+)+t^NM_n(\BA\pair{t}_+)}(ra+\alpha)\varphi(ua)d\alpha du da
	\end{split}
\end{equation*} If $\GL_{n,r}$ is a non-trivial unipotent radical, then the above integration on $u\in\GL_{n,r}(\BQ\pair{t}_+)\backslash\GL_{n,r}(\BA\pair{t}_+)$ vanishes by cuspidality of $\varphi$, so we only need to deal with the case when $\GL_{n,r}=\{1_n\}$, where the above formula becomes 
\begin{align*}
	I(r,N)=&\int_{\GL_n(\BA\pair{t}_+)^1}\int_{\frac{rtM_n(\BA\pair{t}_+)+t^NM_n(\BA\pair{t}_+)}{t^NM_n(\BA\pair{t}_+)}}\\&F_{t^NM_n(\BA\pair{t}_+),rM_n(\BA\pair{t}_+)+t^NM_n(\BA\pair{t}_+)}(ra+\alpha)\varphi(a)d\alpha da
\end{align*} Now consider the following function in $a\in\GL_n(\BA)^1$ $$\int_{\frac{rtM_n(\BA\pair{t}_+)+t^NM_n(\BA\pair{t}_+)}{t^NM_n(\BA\pair{t}_+)}}F_{t^NM_n(\BA\pair{t}_+),rM_n(\BA\pair{t}_+)+t^NM_n(\BA\pair{t}_+)}(ra+\alpha)\varphi(a)d\alpha da$$ we claim that it is $U'(\BQ)$-invariant. Indeed, for $u'\in U'(\BQ)$ we have \begin{align*}
	&\int_{\frac{rtM_n(\BA\pair{t}_+)+t^NM_n(\BA\pair{t}_+)}{t^NM_n(\BA\pair{t}_+)}}F_{t^NM_n(\BA\pair{t}_+),rM_n(\BA\pair{t}_+)+t^NM_n(\BA\pair{t}_+)}(ru'a+\alpha)\varphi(u'a)d\alpha da\\&=\int_{\frac{rM_n(\BA\pair{t}_+)+t^NM_n(\BA\pair{t}_+)}{t^NM_n(\BA\pair{t}_+)}}F_{t^NM_n(\BA\pair{t}_+),rtM_n(\BA\pair{t}_+)+t^NM_n(\BA\pair{t}_+)}(ra+(ru'a-ra+\alpha))\varphi(a)da\\&=\int_{\frac{rM_n(\BA\pair{t}_+)+t^NM_n(\BA\pair{t}_+)}{t^NM_n(\BA\pair{t}_+)}}F_{t^NM_n(\BA\pair{t}_+),rtM_n(\BA\pair{t}_+)+t^NM_n(\BA\pair{t}_+)}(ra+\alpha)\varphi(a)d\alpha da
\end{align*} by a change of variable $\alpha\mapsto(ru'a-ra+\alpha)$ on $\frac{rtM_n(\BA\pair{t}_+)+t^NM_n(\BA\pair{t}_+)}{t^NM_n(\BA\pair{t}_+)}$. Thus we have \begin{align*}
	I(r,N)=&\sum_{u'\in U'(\BQ)}\int_{U'(\BQ)\backslash\GL_n(\BA)^1}\int_{\frac{rtM_n(\BA\pair{t}_+)+t^NM_n(\BA\pair{t}_+)}{t^NM_n(\BA\pair{t}_+)}}\\&F_{t^NM_n(\BA\pair{t}_+),rM_n(\BA\pair{t}_+)+t^NM_n(\BA\pair{t}_+)}(ra+\alpha)\varphi(a)d\alpha da\\&=\sum_{u'\in U'(\BQ)}\int_{U'(\BQ)\backslash U'(\BA)}\int_{U'(\BA)\backslash\GL_n(\BA)^1}\int_{\frac{rtM_n(\BA\pair{t}_+)+t^NM_n(\BA\pair{t}_+)}{t^NM_n(\BA\pair{t}_+)}}\\&F_{t^NM_n(\BA\pair{t}_+),rM_n(\BA\pair{t}_+)+t^NM_n(\BA\pair{t}_+)}(ra+\alpha)\varphi(u'a)d\alpha du'da
\end{align*} Again, the integration for $u'\in U'(\BQ)\backslash U'(\BA)$ vanishes by cuspidality of $\varphi$, thus $I(r,N)=0$. 
\end{proof}
Next we will give a concrete description of the set $C_N$.
\begin{lem}
	$r\in C_N$ if and only if $g_1rg_2+t^Nm\in C_N$ for some $g_1,g_2\in \GL_n(\BQ\pair{t}_+),\,m\in M_n(\BQ\pair{t}_+)$.
\end{lem}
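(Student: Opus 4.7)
The plan is to reduce the equivalence to three separate invariance statements for the set $C_N$: (a) left multiplication by any $g_1 \in \GL_n(\BQ\pair{t}_+)$ preserves $C_N$, (b) right multiplication by any $g_2 \in \GL_n(\BQ\pair{t}_+)$ preserves $C_N$, and (c) addition of any element of $t^N M_n(\BQ\pair{t}_+)$ preserves $C_N$. Since each of these operations has an inverse of the same form, the asserted equivalence follows by composing the inverses and using that the forward direction is trivial (take $g_1 = g_2 = I$, $m = 0$).

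Operations (a) and (c) are essentially bookkeeping. For (a), since $g_1 \in \GL_n(\BQ\pair{t}_+)$ is invertible we have $g_1 M_n(\BA\pair{t}_+) = M_n(\BA\pair{t}_+)$, so a witness $(U', u)$ to the failure of $r \in C_N$ immediately witnesses the failure of $g_1 r \in C_N$ by applying $g_1$ on the left to both sides of the defining inclusion. For (c) with $r' = r + t^N m$, the right-hand side $rt M_n(\BA\pair{t}_+) + t^N M_n(\BA\pair{t}_+)$ is unchanged (since $t^{N+1} m M_n(\BA\pair{t}_+) \subseteq t^N M_n(\BA\pair{t}_+)$), while the left-hand side changes by $t^N m(u-1) \in t^N M_n(\BA\pair{t}_+)$ because $u - 1 \in M_n(\BA)$; again the same pair $(U', u)$ transfers.

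The substantive case is (b), where right multiplication by $g_2$ interacts nontrivially with the element $u \in \GL_n(\BA)$. The key device is to use conjugation by the constant term: write $g_2 = g_2(0) + th$ with $g_2(0) \in \GL_n(\BQ)$ and $h \in M_n(\BQ\pair{t}_+)$, and given a witness $(U', u)$ to $r \notin C_N$, take
$$U'' = g_2(0)^{-1} U' g_2(0), \qquad u'' = g_2(0)^{-1} u g_2(0) \in U''(\BA),$$
which is again a $\BQ$-rational unipotent radical of $\GL_n$. A direct expansion using $g_2 g_2(0)^{-1} = 1 + t h g_2(0)^{-1}$ yields
$$r g_2 u'' - r g_2 \;=\; r(u-1)\, g_2(0) \;+\; r t \bigl(h g_2(0)^{-1} u g_2(0) - h\bigr).$$
The first summand lies in $rt M_n(\BA\pair{t}_+) + t^N M_n(\BA\pair{t}_+)$ by the hypothesis on $(U', u)$ (using $g_2(0) \in \GL_n(\BQ)$ to preserve both lattices), while the second lies in $rt M_n(\BA\pair{t}_+)$. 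Finally, since $t$ commutes with $g_2$ and $g_2 M_n(\BA\pair{t}_+) = M_n(\BA\pair{t}_+)$, one has $rt M_n(\BA\pair{t}_+) = r g_2 t M_n(\BA\pair{t}_+)$, so $(U'', u'')$ witnesses $r g_2 \notin C_N$ as desired.

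The main obstacle is precisely the delicate accounting in step (b): one has to separate the constant-in-$t$ part $g_2(0)$, which provides the correct conjugation of the abstract unipotent radical inside $\GL_n/\BQ$ (since unipotent radicals live naturally in the finite-dimensional group, not in the loop group), from the higher-order part $th$, whose contribution must be shown to be absorbed into the $r t M_n(\BA\pair{t}_+)$ direction. Once this separation is in place, the three invariances combine to give the lemma.
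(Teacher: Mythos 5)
Your proposal is correct and takes essentially the same approach as the paper: both proofs reduce to checking invariance of $C_N$ under left multiplication, right multiplication, and addition of $t^N m$, transporting the witnessing unipotent radical in each case. The only difference is cosmetic: the paper splits right multiplication into two sub-steps (first by $g_2(0)\in\GL_n(\BQ)$, conjugating $U'$, then by $k=1+O(t)$ with the same $U'$), whereas you package both into a single computation via $g_2=g_2(0)+th$ and $U''=g_2(0)^{-1}U'g_2(0)$.
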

\begin{proof}
	For $g\in \GL_n(\BQ\pair{t}_+)$, $r\notin C_N$ implies $gr\notin C_N$: the same unipotent radical also works for $gr$. As for right invariance, we show it for $h\in\GL_n(\BQ)$ and $k=1+a_1t+\cdots$ separately. For $h\in\GL_n(\BQ)$, if $r\notin C_N$, suppose $U'$ is the unipotent radical s.t. $ru-r\in rtM_n(\BA\pair{t}_+)+t^NM_n(\BA\pair{t}_+),\,u\in U'(\BA)$, then $h^{-1}U'h$ works for $rh$; for $k=1+a_1t+\cdots$ if $r\notin C_N$, suppose $U'$ is the unipotent radical s.t. $ru-r\in rtM_n(\BA\pair{t}_+)+t^NM_n(\BA\pair{t}_+),\,u\in U'(\BA)$, we have $$rk(k^{-1}uk)-rk\in rktM_n(\BA\pair{t}_+)+t^NM_n(\BA\pair{t}_+)$$ since $k^{-1}uk\in u+tM_n(\BA\pair{t}_+)$, this $U'$ also works for $rk$. Finally for $m\in M_n(\BQ\pair{t}_+)$, if $r\notin C_N$, suppose $U'$ is the unipotent radical s.t. $ru-r\in rtM_n(\BA\pair{t}_+)+t^NM_n(\BA\pair{t}_+),\,u\in U'(\BA)$, we have $$(r+t^Nm)u-(r+t^Nm)=(ru-r)+t^N(mu-m)\in rtM_n(\BA\pair{t}_+)+t^NM_n(\BA\pair{t}_+)$$ so $r+t^Nm\notin C_N$.
\end{proof}
\begin{prop}
	$$C_N=\bigsqcup_{a_1\leq \cdots\leq a_n<N}\GL_n(\BQ\pair{t}_+)\diag(t^{a_1},\cdots,t^{a_n})\GL_n(\BQ\pair{t}_+)$$ where $\diag(t^{a_1},\cdots,t^{a_n})$ is the diagonal matrix with diagonal entries $t^{a_1},\cdots,t^{a_n}$.
\end{prop}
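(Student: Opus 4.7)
The plan is to combine Smith normal form over the DVR $\BQ\la t\ra_+$ with the bi-invariance from Lemma 6.2.3 to reduce to the diagonal case, and then to analyze the diagonal case by exhibiting the right family of parabolics on one side and ruling out all parabolics on the other.

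First I would observe that $\BQ\la t\ra_+$ is a discrete valuation ring with uniformizer $t$: its units are exactly the elements with nonzero constant term, since $\BQ\la t\ra$ is a field (inverting by pulling out the leading coefficient keeps us inside $\BZ[1/S]((t))$ for a finite set $S$ of primes), and every nonzero element has the form $t^k \cdot (\text{unit})$. Hence classical Smith normal form gives, for any $r \in M_n(\BQ\la t\ra)$, a decomposition $r = g_1 \diag(t^{a_1},\ldots,t^{a_n}) g_2$ with $g_1,g_2 \in \GL_n(\BQ\la t\ra_+)$ and $a_1 \le \cdots \le a_n$ in $\BZ \cup \{+\infty\}$, the tuple being a complete invariant of the double coset. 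By Lemma 6.2.3 (with $m=0$), $r \in C_N$ iff $d := \diag(t^{a_1},\ldots,t^{a_n}) \in C_N$, and the disjointness of the cosets claimed in the proposition is immediate from this uniqueness.

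Next, I would prove $d \in C_N$ iff $a_n < N$. For the direction \emph{$a_n \ge N \Rightarrow d \notin C_N$}, the key is the \emph{non-standard} maximal parabolic $P \subset \GL_n$ stabilizing the coordinate line $\BQ e_n$. Its unipotent radical $U_P$ consists of $u$ with $ue_n = e_n$ and $ue_j = e_j + c_j e_n$ for $j \neq n$, so $u-I$ has row support $\{n\}$. A direct entry-wise computation then gives $(du-d)_{nj} = t^{a_n} c_j \in t^N \BA\la t\ra_+$ (since $a_n \ge N$), with all other entries zero; and the $(n,j)$-component of $dtM_n(\BA\la t\ra_+) + t^N M_n(\BA\la t\ra_+)$ contains $t^N \BA\la t\ra_+$. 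So every $u \in U_P(\BA)$ satisfies $du-d \in dtM_n + t^N M_n$, witnessing $d \notin C_N$.

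For the opposite direction \emph{$a_n < N \Rightarrow d \in C_N$}, take an arbitrary proper parabolic $P$ over $\BQ$. Since $\fu_P \neq 0$, there is $v \in \fu_P(\BQ) \subset \fu_P(\BA)$ with some entry $v_{ij} \in \BQ^\times$ at $(i,j) \in \mathrm{supp}(\fu_P)$. The $(i,j)$-entry of $dv$ is the Laurent monomial $t^{a_i} v_{ij}$ with nonzero coefficient at degree $a_i$. But every element of $t^{a_i+1}\BA\la t\ra_+ + t^N\BA\la t\ra_+$ has zero coefficient at degree $a_i$, because $a_i < a_i+1$ and $a_i \le a_n < N$. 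Hence $dv \notin dtM_n + t^N M_n$, so $u = 1+v$ shows that $P$ cannot witness $d \notin C_N$; since $P$ was arbitrary, $d \in C_N$.

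The main obstacle is the choice of parabolics in the first direction. A first attempt using only standard (upper-triangular) parabolics yields the wrong, weaker condition ``$a_1 \ge N$''. The crucial point is to use the maximal parabolics stabilizing individual coordinate lines, whose unipotent radicals are supported on a single row, so that each $a_i$ is detected separately. Once that choice is made, both directions reduce to short entry-wise computations at the level of Laurent coefficients.
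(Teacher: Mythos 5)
Your proof is correct and follows essentially the same strategy as the paper: Smith normal form (Cartan decomposition) together with the bi-invariance Lemma 6.2.3 reduce to diagonal representatives $\diag(t^{a_1},\dots,t^{a_n})$, the unipotent radical supported on the last row witnesses the noncontainment direction, and a leading-coefficient comparison gives the containment direction $a_n<N\Rightarrow d\in C_N$ (your per-parabolic witness $u=1+v$ is an equivalent formulation of the paper's observation that $u-1\in tM_n(\BA\pair{t}_+)$ forces $u=1$ for constant $u$). The one modest streamlining is that you treat $a_n\geq N$, including the rank-deficient case $a_n=+\infty$, in a single computation with that unipotent radical, whereas the paper first handles $\diag(t^{a_1},\dots,t^{a_k},0,\dots,0)$ with $k<n$ and then reduces the case $a_n\geq N$ to it by subtracting $t^Nm$ and invoking Lemma 6.2.3 again.
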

\begin{proof}
	By the above lemma, $C_N$ is a union of $\GL_n(\BQ\pair{t}_+)$-double cosets. By an analogue of Cartan decomposition, each double coset has a representative of the form $\diag(\diag(t^{a_1},\cdots,t^{a_k},0,\cdots,0)$ for some $k\leq n$, and we only need examine these representatives. For $r=\diag(t^{a_1},\cdots,t^{a_k},0,\cdots,0)$ with $k<n$, take $U'=\{\matrixx{1_{n-1}}{}{*}{1}\}$, then $ru=r$ for $u\in U'$, so $r\notin C_N$. If $r=\diag(t^{a_1},\cdots,t^{a_n})$ with $a_1\leq\cdots\leq a_n$, if $a_n\geq N$, then $$r-t^N\matrixx{0_{n-1}}{}{}{t^{a_2-N}}=\diag(t^{a_1},\cdots,t^{a_{n-1}},0)\notin C_N$$ so $r\notin C_N$ by the above lemma.
	 \par It suffices to prove that $r=\diag(t^{a_1},\cdots,t^{a_n})\in C_N$ for $a_1\leq \cdots\leq a_n<N$, namely we need to prove that if $ru-r\in rtM_n(\BA\pair{t}_+)+t^NM_n(\BA\pair{t}_+)$, then $u=1$. Suppose $ru-r=rtm_1+t^Nm_2$ with $m_1,m_2\in M_n(\BQ\pair{t}_+)$, then $$u-1=tm_1+\diag(t^{N-a_1},\cdots,t^{N-a_n})m_2\in tM_n(\BQ\pair{t}_+)$$ but the degree of $u$ is $0$, this forces $u-1=0$.
\end{proof}
\begin{lem}
	If $r\in C_N\cap R_{N,-M}$, then $I(r,N)=I(r,N+1)$.
\end{lem}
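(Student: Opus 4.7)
The plan is to derive $I(r,N)=I(r,N+1)$ by applying the $p_*$-compatibility of (3.7) to the inner $\alpha$-integration, directly generalizing the one-line proof of Lemma 5.0.1 for $n=1$. First, by Proposition 6.2.4, any $r\in C_N$ has a Cartan-type representative $\diag(t^{a_1},\dots,t^{a_n})$ with $a_n<N$, which gives the key structural fact
\[
t^NM_n(\BA\pair{t}_+)\subseteq rM_n(\BA\pair{t}_+),
\]
so the lattice $L_3:=rM_n(\BA\pair{t}_+)+t^NM_n(\BA\pair{t}_+)$ agrees with $rM_n(\BA\pair{t}_+)+t^{N+1}M_n(\BA\pair{t}_+)$ and is independent of the level. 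Moreover, the integration domain of $\alpha$ in $I(r,N+1)$, namely
\[
D_{N+1}=\frac{rtM_n(\BA\pair{t}_+)+t^{N+1}M_n(\BA\pair{t}_+)}{t^{N+1}M_n(\BA\pair{t}_+)},
\]
splits canonically as $D_N\oplus t^NM_n(\BA)$, because each exponent satisfies $a_i+1\le N$ so the $(i,j)$-entry of $D_{N+1}$ gains precisely one new $t^N$-coefficient over $D_N$.

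Next I apply the $p_*$-compatibility of (3.7) to the chain $L_1:=t^{N+1}M_n(\BA\pair{t}_+)\subseteq L_2:=t^NM_n(\BA\pair{t}_+)\subseteq L_3$. With the canonical adelic measure on $L_2/L_1\cong M_n(\BA)$ this reads
\[
F_{L_2,L_3}(x)=\int_{M_n(\BA)}F_{L_1,L_3}(\tilde x+t^N\gamma)\,d\gamma,\qquad x\in L_3/L_2,
\]
for any lift $\tilde x\in L_3/L_1$. Writing $\alpha\in D_{N+1}$ as $\alpha=\tilde\beta+t^N\gamma$ with $\beta\in D_N$ and $\gamma\in M_n(\BA)$ and performing the inner $\gamma$-integration first converts $F_{L_1,L_3}(ra+\alpha)$ into $F_{L_2,L_3}(ra+\beta)$, so that the $\alpha$-integral in $I(r,N+1)$ collapses to the $\beta$-integral in $I(r,N)$ with the same inner integrand $\int_{D_N}F_{L_2,L_3}(ra+\beta)\,d\beta$ for each fixed $a$.

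It then remains to reconcile the outer stabilizer-quotient integrations at the two levels. Here I use that $r\in C_N$ forces every element of the stabilizer of $r$ in $\GL_n(\BA\pair{t}_+)$ to lie in $1+tM_n(\BA\pair{t}_+)$, so that $\varphi$ (being pulled back from $\GL_n(\BA)$ via evaluation at $t=0$) is genuinely invariant under the stabilizer at either level, and the inner integral $\int_{D_N}F_{L_2,L_3}(ra+\beta)\,d\beta$ is likewise invariant since translation by any $b\in\ker(\GL_n(\BA\pair{t}_+)\to\GL_n(\BA))$ shifts $ra$ by an element of $t^NM_n(\BA\pair{t}_+)$, which lies in $L_2$ and is therefore killed by $F_{L_2,L_3}$. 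Using a change of variables that exploits the invertibility of $a(0)\in\GL_n(\BA)^1$—so that the relevant Jacobian is $|\det a(0)|^n=1$—together with the good-family measure conventions of Section 3.5 and the volume-$1$ normalization of arithmetic quotients from Section 4.2, the level-$(N+1)$ outer integration reduces to the level-$N$ one without a residual scalar, yielding $I(r,N+1)=I(r,N)$.

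The main obstacle I anticipate is exactly this last measure-bookkeeping: making sure that the enlargement of the stabilizer-coset space as one passes from level $N+1$ to level $N$ matches precisely, under the good-family normalization, the collapse of $D_{N+1}$ to $D_N$ carried out by $p_*$, so that no stray adelic factor of $M_n(\BA)$ survives. Once these normalizations are verified, the remainder of the argument is the same one-line $p_*$-compatibility that drove Lemma 5.0.1.
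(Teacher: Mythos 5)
Your core argument -- applying the $p_*$-compatibility of (3.7) along the chain $t^{N+1}M_n(\BA\pair{t}_+)\subseteq t^NM_n(\BA\pair{t}_+)\subseteq rM_n(\BA\pair{t}_+)$ so that the $\alpha$-integral at level $N+1$ collapses to that at level $N$ -- is exactly the argument in the paper, and is correct. Two remarks on the setup: you quote the containment $t^NM_n(\BA\pair{t}_+)\subseteq rM_n(\BA\pair{t}_+)$, but the one you actually need (and implicitly use via $a_i+1\le N$) is the stronger $t^NM_n(\BA\pair{t}_+)\subseteq rtM_n(\BA\pair{t}_+)$, which is what makes both $rtM_n+t^NM_n$ and $rM_n+t^NM_n$ level-independent; and the ``splitting'' of $D_{N+1}$ is really a filtration $0\to t^NM_n(\BA)/t^{N+1}M_n(\BA)\to D_{N+1}\to D_N\to 0$, which is all you need for the iterated integration.

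The genuine gap is in your last paragraph. The thing that makes the outer $a$-integration identical at both levels is not a Jacobian computation with $|\det a(0)|^n=1$, nor a statement about ``the stabilizer of $r$ in $\GL_n(\BA\pair{t}_+)$ lying in $1+tM_n(\BA\pair{t}_+)$'' -- neither of these corresponds to an actual step you need to take. The relevant stabilizer in the definition (6.6) of $I(r,N)$ is $\GL_{n,r}(\BQ)\subseteq\GL_n(\BQ)$, the stabilizer of the coset $rU(\BQ)$ under the right $\GL_n(\BQ)$-action. For $r\in C_N$, i.e. $r$ equivalent to $\diag(t^{a_1},\dots,t^{a_n})$ with all $a_i<N$, one has $\GL_{n,r}(\BQ)=\{1_n\}$: any $g\in\GL_n(\BQ)$ with $rg\equiv ru\pmod{t^NM_n}$ for some $u\in U(\BQ)$ must be constant and equal to $1$ plus terms of strictly positive $t$-degree, hence $g=1$. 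Since also $C_N\subseteq C_{N+1}$, the same triviality holds at level $N+1$. Consequently the middle and outer integrals in (6.6) degenerate to the single integral over $\GL_n(\BA\pair{t}_+)^1$, identically at both levels, with nothing to reconcile and no change of variables in $a$ at all. Your proposal flags exactly this as ``the main obstacle I anticipate,'' and the fix is to note $\GL_{n,r}=\{1_n\}$ rather than to produce measure-bookkeeping for a quotient that does not appear.
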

\begin{proof}
	 If $r\in C_N$, then $\GL_{n,r}=\{1_n\}$ and $t^NM_n(\BQ\pair{t}_+)\subseteq rtM_n(\BQ\pair{t}_+)$, so
\begin{align*}
	 I(r,N)&=\int_{\GL_n(\BA\pair{t}_+)^1}\int_{\frac{rtM_n(\BA\pair{t}_+)}{t^NM_n(\BA\pair{t}_+)}}F_{t^NM_n(\BA\pair{t}_+),rM_n(\BA\pair{t}_+)}(ra+\alpha)\varphi(a)d\alpha da\\&=\int_{\GL_n(\BA\pair{t}_+)^1}\int_{\frac{rtM_n(\BA\pair{t}_+)}{t^NM_n(\BA\pair{t}_+)}}\int_{\frac{t^NM_n(\BA\pair{t}_+)}{t^{N+1}M_n(\BA\pair{t}_+)}}\\&F_{t^{N+1}M_n(\BA\pair{t}_+),rM_n(\BA\pair{t}_+)}(ra+\alpha+\beta)\varphi(a)d\alpha d\beta da\\&=I(r,N+1)
\end{align*} by the $p_*$-compatibility in (3.7). 
\end{proof}
So we define $I_r=I(r,N)$ for all $N$ such that $r\in C_N$. In particular, we have $$I_r=\int_{\GL_n(\BA)^1}F_{rtM_n(\BA\pair{t}_+),rM_n(\BA\pair{t}_+)}(ra)\varphi(a) da$$ by the compatibility conditions (3.7).
\begin{thm}
	 $$\lim_{M,N\to\infty}S_{N,-M}=\sum_{r\in\GL_n(\BQ\pair{t})/\GL_n(\BQ\pair{t}_+)}\int_{\GL_n(\BA)^1}F_{rM_n(\BA\pair{t}_+),rtM_n(\BA\pair{t}_+)}(ra)\varphi(a)da$$
\end{thm}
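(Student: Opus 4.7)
The plan is as follows. First I would apply Lemma 6.2.2 to drop from the sum in $(6.5)$ every orbital integral with $r \notin C_N$; by Lemma 6.2.3 the set $C_N$ is a union of $\GL_n(\BQ\pair{t}_+)$-double cosets, so this truncation does not depend on the choice of representatives $R_{N,-M}$ and yields
\[
S_{N,-M} \;=\; \sum_{r \,\in\, T_{N,M}} I(r,N), \qquad T_{N,M} := (C_N \,\cap\, M_n(\BQ\pair{t})_{N,-M})\,/\,\GL_n(\BQ\pair{t}_+).
\]
Lemma 6.2.5 then lets us replace $I(r,N)$ by the stabilized value $I_r$ displayed just above the statement of the theorem, giving the key identity $S_{N,-M} = \sum_{r \in T_{N,M}} I_r$, valid for all $M,N$.

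Next I would invoke Proposition 6.2.4 together with the classical Cartan decomposition
\[
\GL_n(\BQ\pair{t}) \;=\; \bigsqcup_{a_1 \le \cdots \le a_n} \GL_n(\BQ\pair{t}_+)\,\diag(t^{a_1},\ldots,t^{a_n})\,\GL_n(\BQ\pair{t}_+)
\]
to identify $C_N$ with the union of those double cosets satisfying $a_n < N$. In particular $C_N \subseteq \GL_n(\BQ\pair{t})$, the family $\{C_N\}$ is increasing, and $\bigcup_N C_N = \GL_n(\BQ\pair{t})$; the further condition $r \in M_n(\BQ\pair{t})_{N,-M}$ amounts to $a_1 \ge -M$, so the sets $T_{N,M}$ form an increasing exhaustion of the affine Grassmannian $R := \GL_n(\BQ\pair{t})/\GL_n(\BQ\pair{t}_+)$ as $M,N \to \infty$. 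Combining the previous two steps, $\lim_{M,N\to\infty} S_{N,-M}$ equals the limit of the partial sums $\sum_{r \in T_{N,M}} I_r$ along this exhaustion, which is precisely the right-hand side $\sum_{r \in R} I_r$ of the theorem.

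I expect the main technical obstacle to be controlling this limit rigorously, i.e., verifying that the series $\sum_r I_r$ converges in a meaningful sense. Convergence along the specific exhaustion $T_{N,M}$ is automatic from the existence of $\lim_{M,N\to\infty} S_{N,-M} = \Theta_f(\varphi,g)$, already established in Section 6.1 via the absolute convergence of the theta integral (cf.\ \cite{LZ2}, Proposition 3.2) and the exchange of limit with integration. Upgrading this to genuine absolute convergence of $\sum_r |I_r|$, so that the right-hand side makes sense as an unordered series independent of exhaustion, would require a uniform bound on $|I_r|$: the natural route is to combine the rapid decay of $\varphi$ on a Siegel domain in $\GL_n(\BA)^1$ with the Schwartz decay of $F_{rtM_n(\BA\pair{t}_+),\,rM_n(\BA\pair{t}_+)}$ to dominate $|I_r|$ by a quantity depending only on the Cartan invariants $(a_1,\ldots,a_n)$, reducing absolute convergence to a standard height-sum estimate over $\{(a_1,\ldots,a_n)\in\BZ^n : a_1\le\cdots\le a_n\}$.
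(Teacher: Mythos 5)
Your proposal matches the paper's proof in its essential structure: drop the $r\notin C_N$ terms via Lemma 6.2.2, stabilize $I(r,N)$ to $I_r$ via Lemma 6.2.5, and then pass to the limit using $\GL_n(\BQ\pair{t})=\bigcup_N C_N$ (which, together with the $a_1\ge -M$ constraint from $r\in M_n(\BQ\pair{t})_{N,-M}$, gives the exhaustion of the affine Grassmannian). The paper's argument is more terse and silently leaves the convergence issue where your last paragraph leaves it — convergence along the specific exhaustion $T_{N,M}$ is inherited from the absolute convergence of the theta integral, while upgrading to unconditional absolute convergence of $\sum_r I_r$ is not carried out — so your flagging of that point is an accurate reading of what the paper does and does not establish, not a defect in your proposal.
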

\begin{proof}Recall that $M_n(\BQ\pair{t})_{N,-M}=t^{-M}M_n(\BQ)\oplus t^{-M+1}M_n(\BQ)\oplus\cdots\oplus t^{N-1}M_n(\BQ)$, then we have
	\begin{align*}
		S_{N,-M}&=\sum_{r\in C_N\cap R_{N,-M}}I(r,N)\\&=\sum_{r\in (C_N\cap M_n(\BQ\pair{t})_{-M,N})/\GL_n(\BQ\pair{t}_+)}\int_{\GL_n(\BA)^1}f_{rtM_n(\BA\pair{t}_+),rM_n(\BA\pair{t}_+)}(r a)\varphi(a)da
	\end{align*}
	Note that $\GL_n(\BQ\pair{t})=\bigcup_{N\in\BZ}C_N$, hence the result.
\end{proof}
\begin{thm}
	Consider the function on $x\in\widetilde\GL_n(\BA\pair{t}_+)$ given by $$I_f(\varphi, x)=\int_{\GL_n(\BA)^1}(\pi(x)f)_{tM_n(\BA\pair{t}_+),M_n(\BA\pair{t}_+)}(a)\varphi(a)da$$ then $$\Theta_f(\varphi,g)=\sum_{r\in\GL_n(\BQ\pair{t}_+)\backslash\GL_n(\BQ\pair{t})}I_f(\varphi,rg)$$
\end{thm}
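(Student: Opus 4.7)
The plan is to deduce this from the previous theorem (Theorem 6.2.6). Setting $F = \pi(g)f$ and $L_0 = M_n(\BA\pair{t}_+)$, that theorem expresses $\Theta_f(\varphi, g)$ as a sum of integrals of the form $\int_{\GL_n(\BA)^1} F_{rtL_0, rL_0}(ra)\,\varphi(a)\,da$ (with the smaller lattice listed first, following the compatible-family conventions of Section 3.2). The entire claim reduces to recognizing each such summand as $I_f(\varphi, rg)$.

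The central computation is an application of the action formula from Section 3.4 to the element $r \in \GL_n(\BQ\pair{t}) \subset \widetilde\GL_n(\BA\pair{t})$: for any compatible family $\Psi$,
\[
(\pi(r)\Psi)_{L, L'}(x) \;=\; \Psi_{L \cdot r^{-1},\, L' \cdot r^{-1}}(x \cdot r).
\]
Applied to $\Psi = F$ and to the pair $(L, L') = (tL_0, L_0)$, this formula transfers the factor of $r$ from the Schwartz function onto the evaluation point and the lattice indices. After identifying $L_0 \cdot r^{-1}$ with $rL_0$ (using the $G$-side convention $x \cdot g = g^t x$ of Section 4.1, together with the behaviour of $\BQ\pair{t}_+$-lattices under this action), one obtains
\[
F_{rtL_0,\, rL_0}(ra) \;=\; (\pi(r)F)_{tL_0,\, L_0}(a) \;=\; (\pi(rg)f)_{tL_0,\, L_0}(a),
\]
and integrating against $\varphi(a)$ over $\GL_n(\BA)^1$ yields exactly $I_f(\varphi, rg)$, as required.

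What remains is to match the index sets. The summation in the previous theorem is written over $\GL_n(\BQ\pair{t})/\GL_n(\BQ\pair{t}_+)$, whereas the conclusion is expressed in terms of $\GL_n(\BQ\pair{t}_+)\backslash\GL_n(\BQ\pair{t})$; the two coset spaces are in bijection via the natural anti-involution induced by the transpose/inverse twist intrinsic to the $G$-side action. One also needs to verify that $I_f(\varphi, rg)$ genuinely descends to the right coset space, i.e.\ is left $\GL_n(\BQ\pair{t}_+)$-invariant in $r$; this follows from the fact that elements of $\GL_n(\BQ\pair{t}_+)$ stabilize the lattice pair $(tL_0, L_0)$ and from the $\GL_n(\BQ)$-invariance of $\varphi$ on $\GL_n(\BA)^1$.

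The main obstacle is notational rather than conceptual: the careful bookkeeping of action conventions, particularly the reconciliation between the $G$-side right action $x \cdot g = g^t x$ used in the definition of $\pi$ and the plain matrix product $ra$ that appears in the summand of the previous theorem. Once these transpose/inversion twists are pinned down, the theorem is a direct consequence of the previous one together with the unpacking of $I_f$.
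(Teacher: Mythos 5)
Your overall strategy is the same as the paper's: substitute the result of Theorem 6.2.6 for $\lim_{M,N}S_{N,-M}$, apply the $\pi$-action formula of Section 3.4 to recognise each summand as $I_f(\varphi,rg)$, and reindex the sum from $\GL_n(\BQ\pair{t})/\GL_n(\BQ\pair{t}_+)$ to $\GL_n(\BQ\pair{t}_+)\backslash\GL_n(\BQ\pair{t})$. One thing to tighten up: the displayed identity $F_{rtL_0,\,rL_0}(ra)=(\pi(r)F)_{tL_0,\,L_0}(a)$ is not what the $G$-side action literally gives, since with $x\cdot g=g^tx$ one has $L_0\cdot r^{-1}=(r^t)^{-1}L_0$ rather than $rL_0$, and $a\cdot r=r^ta$ rather than $ra$; so the element of $\widetilde G$ that produces the correct lattice pair and evaluation point is a transpose-twisted version of $r$, not $r$ itself. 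The paper handles this by inserting $\pi(r^t)$ into the summand and then relabelling $r^t\mapsto r$, which is precisely what converts the left coset space to the right coset space. In other words, the ``anti-involution'' reindexing you mention at the end is not a separate, optional bookkeeping chore to be done after the main identity; it is built into the main identity, and stating the identity with $\pi(r)$ and then also invoking the transpose reindexing is slightly double-counting the twist. Once the $\pi(r^t)$ is in place, the rest of your argument (in particular the left $\GL_n(\BQ\pair{t}_+)$-invariance of $r\mapsto I_f(\varphi,rg)$ because $\GL_n(\BQ\pair{t}_+)$ stabilises the pair $(tL_0,L_0)$) goes through and matches the paper's computation.
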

\begin{proof}
	\begin{align*}
		 &\Theta_f(\varphi,g)=\lim_{M,N\to\infty}S_{N,-M}\\&=\sum_{r\in\GL_n(\BQ\pair{t})/\GL_n(\BQ\pair{t}_+)}\int_{\GL_n(\BA)^1}F_{rtM_n(\BA\pair{t}_+),rM_n(\BA\pair{t}_+)}(ra)\varphi(a)da\\&=\sum_{r\in\GL_n(\BQ\pair{t})/\GL_n(\BQ\pair{t}_+)}\int_{\GL_n(\BA)^1}(\pi(r^t)F)_{tM_n(\BA\pair{t}_+),M_n(\BA\pair{t}_+)}(ra)\varphi(a)da\\&=\sum_{r\in\GL_n(\BQ\pair{t}_+)\backslash\GL_n(\BQ\pair{t})}\int_{\GL_n(\BA)^1}(\pi(r)F)_{tM_n(\BA\pair{t}_+),M_n(\BA\pair{t}_+)}(ra)\varphi(a)da\\&=\sum_{r\in\GL_n(\BQ\pair{t}_+)\backslash\GL_n(\BQ\pair{t})}\int_{\GL_n(\BA)^1}(\pi(r)\pi(g)f)_{tM_n(\BA\pair{t}_+),M_n(\BA\pair{t}_+)}(ra)\varphi(a)da\\&=\sum_{r\in\GL_n(\BQ\pair{t}_+)\backslash\GL_n(\BQ\pair{t})}I_f(\varphi,rg)
	\end{align*}
\end{proof}
By applying the definition of the map $\Phi$ in Section 3.4, we get Theorem 1.0.1.  
\par Note that $\GL_n(\BQ\pair{t}_+)$ can be viewed as a parabolic subgroup of $\GL_n(\BQ\pair{t})$ with unipotent radical $U(\BQ)$ and Levi subgroup $\GL_n(\BQ)$. The function $I_f(x)$ is $U(\BA)\GL_n(\BQ)$-invariant, so the theta lifting can be explained as an Eisenstein series induced from this function.


\begin{thebibliography}{aa}


\bibitem[GZ1]{GZ1}

H. Garland, Y. Zhu, {\it On the Siegel-Weil theorem for loop groups. I},
Duke Math. J. {\bf 157} (2011), no. 2, 283--336.

\bibitem[GZ2]{GZ2}

\bysame, {\it On the Siegel-Weil theorem for loop groups. II},
Amer. J. Math. {\bf 133} (2011), no. 6, 1663--1712.


\bibitem[H]{H}
R. Howe, {\it $\theta$-series and invariant theory,} Automorphic forms, representations and L-functions, Part 1, pp. 275--285, Proc. Sympos. Pure Math., {\bf 33}, Amer. Math. Soc., Providence, R.I., 1979.


\bibitem[Ka]{Ka}M. Kapranov, {\it Semiinfinite symmetric powers.} arXiv:math/0107089

\bibitem[Li]{Li}D. Liu, {\it Eisenstein series on loop groups}, Trans. Amer. Math. Soc. {\bf 367} (2015), no. 3, 2079--2135.

\bibitem[LZ1]{LZ1}

D. Liu, Y. Zhu, {\it On the theta functional of Weil representations for symplectic loop groups},
J. Algebra {\bf 324} (2010), no. 11, 3115--3130.


\bibitem[LZ2]{LZ2}

D. Liu, Y. Zhu, {\it Theta lifting for loop groups},
 to appear in Duke J. of Math


\bibitem[MVW]{MVW}

C. M{\oe}glin, M.-F. Vign\'eras and J.-L. Waldspurger, {\it Correspondances de Howe sur un corps $p$-adique},
Lecture Notes in Math. vol. {\bf 1291}, Springer-Verlag, Berlin, 1987.



\bibitem[W1]{W1}

A. Weil, {\it Sur Certaines Groups d'operators unitaires}, Acta Math.
  {\bf 11} (1964), 143--211.

\bibitem[W2]{W2}

\bysame, {\it Sur la formule de Siegel dans la theorie des groupes classiques},
Acta Math. {\bf 113} (1965), 1--88.


\bibitem[Z]{Z}

Y. Zhu, {\it Theta functions and Weil representations of loop symplectic groups}, Duke Math. J. {\bf 143} (2008), no. 1, 17--39.



\end{thebibliography}
\end{document}